\documentclass[11pt]{amsart}
\usepackage{amsmath, amsthm, amscd, amsfonts,amssymb, graphicx}
\usepackage{accents}
\usepackage{xcolor}
\usepackage{hyperref}

            


\textheight=24cm
\textwidth=17cm
\oddsidemargin=-0.1cm
\evensidemargin=-0.1cm
\topmargin=-1.5cm

\newlength{\dhatheight}

\newcommand{\bea}{\begin{eqnarray*}}
\newcommand{\eea}{\end{eqnarray*}}

\newcommand{\beq}{\begin{equation}}
\newcommand{\eeq}{\end{equation}}

\newcommand{\dd}{{\rm d}}

\newcommand{\ii}{\mathrm{i}}

\newtheorem{theorem}{Theorem}[section]
\newtheorem{lemma}[theorem]{Lemma}
\theoremstyle{definition}

\newtheorem{proposition}[theorem]{Proposition}
\newtheorem{corollary}[theorem]{Corollary}

\theoremstyle{remark}
\newtheorem{remark}[theorem]{Remark}

\numberwithin{equation}{section}

\begin{document}

\title[Non-Abelian Fourier Series on $\mathbb{Z}^2\backslash SE(2)$]{Non-Abelian Fourier Series on $\mathbb{Z}^2\backslash SE(2)$}

\author[A. Ghaani Farashahi]{Arash Ghaani Farashahi$^{1,*}$}
\address{$^1$Department of Mechanical Engineering, National University of Singapore,
9 Engineering Drive 1, Singapore 117575.}
\email{arash.ghaanifarashahi@nus.edu.sg}
\email{ghaanifarashahi@outlook.com}

\author[G.S. Chirikjian]{Gregory S. Chirikjian$^2$}
\address{$^2$Department of Mechanical Engineering, National University of Singapore,
9 Engineering Drive 1, Singapore 117575.}
\email{mpegre@nus.edu.sg}

\subjclass[2010]{Primary 43A30, 43A85, Secondary 20H15, 43A10, 43A15, 43A20, 68T40, 74E15, 82D25.}

\date{\today}

\keywords{Special Euclidean group, non-Abelian Fourier series, right coset space,  homogeneous space, discrete (crystallographic) subgroup, square (orthogonal) lattice.}
\thanks{$^*$Corresponding author}
\thanks{E-mail addresses: arash.ghaanifarashahi@nus.edu.sg (Arash Ghaani Farashahi) and mpegre@nus.edu.sg (Gregory S. Chirikjian)}

\begin{abstract}
This paper discusses computational structure of coefficients of non-Abelian Fourier series on the right coset space $\mathbb{Z}^2\backslash SE(2)$ expressed in the trigonometric basis,  where $SE(2)$ is the group of handedness preserving Euclidean isometries of the plane and $\mathbb{Z}^2$ denotes the discrete subgroup of translations of the orthogonal (square) lattice in $\mathbb{R}^2$.   Assume that $\mu$ is the finite $SE(2)$-invariant measure on the right coset space $\mathbb{Z}^2\backslash SE(2)$,  normalized with respect to Weil's formula.  We present a constructive computational characterization including discrete sampling of non-Abelian Fourier matrix elements on $SE(2)$ for coefficients of $\mu$-square integrable functions on $\mathbb{Z}^2\backslash SE(2)$ with respect to the concrete trigonometric basis.   
The paper is concluded with discussion of the method for non-Abelian Fourier coefficients of convolutions on $\mathbb{Z}^2\backslash SE(2)$. 
\end{abstract}

\maketitle


\section{{\bf Introduction}}

Special Euclidean groups are classical non-Abelian and non-compact semi-direct product groups used for modeling of the rigid body motions on Euclidean spaces. These semi-direct product Lie groups play significant roles in mathematical analysis,  geometry,   and physics \cite{A.Z.ACHA, Bag.Mer.Pac.Tay, Fuhr,  Kisil.Book, Kisil.JPA}. Over the last decades,  constructive approximation techniques including Fourier analysis on special Euclidean groups and their discretizations have obtained significant attention in both theoretical and applied areas, see \cite{Ba.Gi,  PI5,  Du,  AGHF.GSC.ITSF,  AGHF.GSC.JAT,  Les,  DR.SIAM.JC.2007,DR.ACHA.1995,YY.ITSF.2007,YY.IPI.2007}.  
The right coset space associated to the orthogonal lattice $\mathbb{Z}^2$ in the 2D special Euclidean group $SE(2)$, denoted as $\mathbb{Z}^2\backslash SE(2)$, appears as the configuration space in applications related to computational science and engineering including computer vision, robotics, mathematical crystallography, computational biology, and material science \cite{GSC.JFAA.2000, PI3, ABK.GSC.ACHA.2000, W.Z.M.GSC.IEEE.TC}.
Most such applications involve localized convolutions of functions on $SE(2)$ which can be reformulated as group convolution of $SE(2)$ on $\mathbb{Z}^2\backslash SE(2)$.  

The group of translations of the orthogonal lattice $\mathbb{Z}^2$ is not a normal subgroup of $SE(2)$. 
Consequently,  the right coset space $\mathbb{Z}^2\backslash SE(2)$ has no locally compact group structure.  
Therefore, the classical notion of non-Abelian Fourier transform compatible with group convolution on $\mathbb{Z}^2\backslash SE(2)$ is meaningless. However, there are still both algebraic and geometric structures on $\mathbb{Z}^2\backslash SE(2)$,  namely  the transitive right smooth action of the Lie group $SE(2)$ on the $3$-dimensional manifold $\mathbb{Z}^2\backslash SE(2)$,  making the right coset space $\mathbb{Z}^2\backslash SE(2)$ into a compact homogeneous space which can be viewed geometrically as the $3$-torus. 
Our goal in this paper is to discuss a constructive technique to endow the right coset space $\mathbb{Z}^2\backslash SE(2)$ with a unified structure that allows for constructive approximation of such convolutions while benefiting from both group structure of $SE(2)$ and compactness of the right coset space $\mathbb{Z}^2\backslash SE(2)$ by employing the concrete basis of trigonometric polynomials on the $3$-torus.  

The unified mathematical theory of Fourier analysis on homogeneous spaces of compact groups is addressed in \cite{AGHF.GGD1, AGHF.MMJ} and references therein.  In the case of canonical homogeneous spaces of semi-direct product groups with Abelian normal factor, a concrete approach to the relative Fourier analysis is discussed in \cite{AGHF.JKMS}.  The later approaches strongly benefit from some conditions on the homogeneous space which do not hold for the case of the homogeneous space $\mathbb{Z}^2\backslash SE(2)$ and hence a different Fourier method is required for the right coset space $\mathbb{Z}^2\backslash SE(2)$.  In \cite{AGHF.GSC.PAMQ}, a systematic approach for non-Abelian Fourier series on the right coset spaces of discrete co-compact subgroups of $SE(2)$ introduced.  

Suppose that $\mathbf{\Gamma}$ is a discrete co-compact subgroup of $SE(2)$. Let $\mu$ be the finite $SE(2)$-invariant measure on the right coset space $\mathbf{\Gamma}\backslash SE(2)$ normalized with respect to Weil's formula 
and $\mathcal{E}(\mathbf{\Gamma}):=(\psi_\ell)_{\ell\in\mathbb{I}}$ be a countable orthonormal basis for the Hilbert function space $L^2(\mathbf{\Gamma}\backslash SE(2),\mu)$.  Assume that $K$ is a compact subset of $SE(2)$ and $f\in\mathcal{C}_c(SE(2))$ is supported in $K$ such that $\widehat{f}\in\mathcal{H}^1(0,\infty)$.  It is shown that  
\begin{equation}\label{FourierInv.PAMQ}
\widetilde{f}=\sum_{\ell\in\mathbb{I}}c_\ell\psi_\ell,
\vspace{-0.2cm}
\end{equation}
where the series converges in the mean of the $L^2(\mathbf{\Gamma}\backslash SE(2))$ with 
\begin{equation}\label{Fourier.C.PAMQ}
c_\ell:=\int_0^\infty\mathrm{tr}\left[\widehat{f}(p)Q_{K}^\ell(p)\right]p\dd p,\hspace{.75cm}{\rm where}\ \ Q^\ell_{K}(p):=\int_{K}\overline{\psi_\ell(\Gamma g)} U_p(g)\dd g,
\end{equation}
for every $p>0$, $\gamma\in\mathbf{\Gamma}$ and $\ell\in\mathbb{I}$, see Theorem 3.1 and Proposition 3.2 of \cite{AGHF.GSC.PAMQ}.

We here explore analytical aspects of the above constructive Fourier type approximation method for functions $f\in L^1\cap L^2(SE(2))$ such that $\widetilde{f}\in L^2(\mathbf{\Gamma}\backslash SE(2))$ and $\widehat{f}\in\mathcal{H}^1(0,\infty)$,  when $\mathbf{\Gamma}:=\mathbb{Z}^2$ is the set of translational motions of the orthogonal lattice in $\mathbb{R}^2$ and $\mathcal{E}(\mathbb{Z}^2)$ is the concrete trigonometric basis for the Hilbert function space $L^2(\mathbb{Z}^2\backslash SE(2),\mu)$ given by the trigonometric polynomials.

This article organized in 6 sections.  
Section \ref{sec:2} is devoted to fixing notation and gives preliminaries of non-Abelian Fourier analysis on 
the unimodular group $SE(2)$ and classical properties of functional analysis on 
the right coset space $\mathbb{Z}^2\backslash SE(2)$. In section \ref{sec:3},  we present a generalized version of formula (\ref{Fourier.C.PAMQ}) for a larger class of functions and  investigate structure of the trigonometric basis on the right coset space $\mathbb{Z}^2\backslash SE(2)$.  
Section \ref{sec:4} discusses analytical properties of non-Abelian Fourier integral matrix element on $SE(2)$. 
We then introduce a constructive method for analysing non-Abelian  Fourier coefficients in $L^2(\mathbb{Z}^2\backslash SE(2))$  using the analytical theory of harmonic analysis of Hankel transforms.   As the main result we
present a constructive closed form for computational structure of coefficients of square-integrable functions $\widetilde{f}$ expressed in the trigonometric basis in terms of non-Abelian Fourier matrix elements of $f$ on $SE(2)$.   The paper is concluded with application of the discussed analytic method for non-Abelian Fourier coefficients  of convolutions on $\mathbb{Z}^2\backslash SE(2)$.  

\section{\bf{Preliminaries and Notation}}
\label{sec:2}

Throughout this section we shall present preliminaries and the notation. 

We denote the set of non-negative (resp.  non-positive) real numbers 
$[0,\infty)$ (resp.  $(-\infty,0]$) by $\mathbb{R}_+$ (resp.  $\mathbb{R}_-$) and the set of non-zero real numbers by $\mathbb{R}^*$.  Then $\mathbb{R}^*_+$ (resp.  $\mathbb{R}^*_-$) denotes $(0,\infty)$ (resp.  $(-\infty,0)$). 
For $\mathbf{x},\mathbf{y}\in\mathbb{R}^2$,  $\langle\mathbf{x},\mathbf{y}\rangle$ is the dot product (standard inner product).

\subsection{Bessel functions}
The Bessel function $J_m(x)$ satisfies the following integral representations
\begin{equation}\label{B.i.R}
J_m(x)=\frac{\ii^{-m}}{2\pi}\int_0^{2\pi}e^{\ii x\cos\theta}e^{-\ii m\theta}\dd\theta=\frac{1}{2\pi}\int_0^{2\pi}e^{\ii x\sin\theta}e^{-\ii m\theta}\dd\theta,
\end{equation}
which is equivalent to the following Jacobi-Anger expansions, formula (4.9.6) of \cite{AAR99}, 
\begin{equation}\label{JA}
e^{\ii x\cos\theta}=\sum_{m=-\infty}^{\infty}\ii^mJ_m(x)e^{\ii m\theta},
\hspace{1cm}
e^{\ii x\sin\theta}=\sum_{m=-\infty}^{\infty}J_m(x)e^{\ii m\theta}.
\vspace{-0.2cm}
\end{equation}
Assume that $m\in\mathbb{Z}$ and $N\in\mathbb{N}$.
Let $H_m^N:L^2(\mathbb{R}^*_+,r\dd r)\to L^2(\mathbb{R}^*_+,r\dd r)$ be the bounded linear operator given by 
\begin{equation}
H_m^N(v)(r):=\int_0^Nv(p)J_m(pr)p\dd p,
\end{equation}
for every $v\in L^2(\mathbb{R}^*_+,r\dd r)$ and $r\in\mathbb{R}^*_+$.
The Hankel transform of order $m$ of $v\in L^2(\mathbb{R}^*_+,r\dd r)$, denoted by $H_m^\infty(v)$,  is then defined as the unique limit in the Hilbert space $L^2(\mathbb{R}^*_+,r\dd r)$, satisfying 
\[
\lim_{N\to\infty}\int_0^\infty\left|H_m^\infty(v)(r)-\int_0^Nv(p)J_m(pr)p\dd p\right|^2r\dd r=0.
\]
The Hankel transform $H_m^\infty:L^2(\mathbb{R}^*_+,r\dd r)\to L^2(\mathbb{R}^*_+,r\dd r)$ is a unitary operator and $(H_m^\infty)^{-1}=H_m^\infty$,  see \cite{Kerr}.  If $v\in L^2(\mathbb{R}^*_+,r\dd r)$ and the integral 
\begin{equation*}
\int_0^\infty v(p)J_m(pr)p\dd p,
\end{equation*} 
is converges absolutely for a.e. $r\in\mathbb{R}^*_+$, then  
\begin{equation}\label{Hank.2.abs}
H_m^\infty(v)(r)=\int_0^\infty v(p)J_m(pr)p\dd p,\ \ \ {\rm for\ a.e.}\ \ r\in\mathbb{R}^*_+.
\end{equation}
In particular, if $v\in L^1\cap L^2(\mathbb{R}^*_+,r\dd r)$ then $H_m^\infty(v)$ satisfies the explicit formula (\ref{Hank.2.abs})
for a.e. $r\in\mathbb{R}^*_+$, where the integral (\ref{Hank.2.abs}) converges absolutely for every $r\in\mathbb{R}^*_+$. In addition, if $v\in\mathcal{C}_c^\infty(0,\infty)$ then 
\[
v(r)=\int_0^\infty H_m^\infty(v)(p)J_m(pr)p\dd p,
\]
for every $r\in (0,\infty)$, see Lemma 2.7 of \cite{Bet.Ste}.

\subsection{Harmonic analysis on $SE(2)$}
The 2D special Euclidean motion group,  denoted $SE(2)$,  can be realized as the semi-direct product of the Abelian group $\mathbb{R}^2$ with the 2D special orthogonal group $SO(2)$,  that is $SE(2)=\mathbb{R}^2\rtimes SO(2)$.
The group element $g\in SE(2)$ is then denoted as $g=(\mathbf{x},\mathbf{R})$ where $\mathbf{x}\in\mathbb{R}^2$ and $\mathbf{R}\in SO(2)$.  For elements $g=(\mathbf{x},\mathbf{R})$ and 
$g'=(\mathbf{x}',\mathbf{R}')\in SE(2)$ the group operation is 
\[
g\circ g'=(\mathbf{x}+\mathbf{R}\mathbf{x}',\mathbf{RR}'),
\]
with the inverse 
\[
g^{-1}=(-\mathbf{R}^T\mathbf{x},\mathbf{R}^T)=(-\mathbf{R}^{-1}\mathbf{x},\mathbf{R}^{-1}).
\]
The group $SE(2)$ can be faithfully represented by the group of matrices of the form 
\begin{equation}\label{gxyt}
g(x_1,x_2,\theta):=\hspace{-0.1cm}\left(\hspace{-0.1cm}\begin{array}{ccc}
\cos\theta & -\sin\theta & x_1 \\ 
\sin\theta & \cos\theta & x_2 \\ 
0 & 0 & 1
\end{array}\hspace{-0.1cm}\right)
\hspace{-0.1cm}=\hspace{-0.1cm}\left(\hspace{-0.1cm}\begin{array}{ccc}
\cos\theta & -\sin\theta & a\cos\phi \\ 
\sin\theta & \cos\theta & a\sin\phi \\ 
0 & 0 & 1
\end{array}\hspace{-0.1cm}\right)\hspace{-0.1cm}=:g(a,\phi,\theta).
\end{equation}
The group $SE(2)$ is non-Abelian and unimodular.  The normalized Haar measure on $SE(2)$ is given by 
\[
\dd g=\frac{1}{4\pi^2}\dd\mathbf{x}\dd\theta=\frac{1}{4\pi^2}\dd x_1\dd x_2\dd\theta=\frac{1}{4\pi^2}a\dd a\dd\phi \dd\theta.
\]
Suppose $f_j\in L^1(SE(2))$ with $j\in\{1,2\}$. The convolution $f_1\star f_2\in L^1(SE(2))$ is 
\[
(f_1\star f_2)(g)=\int_{SE(2)}f_1(h)f_2(h^{-1}\circ g)\dd h,\hspace{1cm}{\rm for}\ g\in SE(2).
\]
The Lie group $SE(2)$ is solvable,  and so standard theory for characterizing unitary representations of solvable Lie groups could be employed, see \cite{M.Sugiu}.
The unitary dual $\widehat{SE(2)}$, that is the set of equivalence classes of irreducible unitary representations of $SE(2)$,   can be identified via the following characterization  
\begin{equation*}
\widehat{SE(2)}:=\left\{\chi_n:n\in\mathbb{Z}\right\}\cup\left\{U_p:p> 0\right\}.
\end{equation*}
If $n\in\mathbb{Z}$ then the character $\chi_n:SE(2)\to\mathbb{T}$ is defined by 
\begin{equation}
\chi_n(g):=e^{\ii n\theta},
\end{equation}
for every $g=(\mathbf{t},\mathbf{R}_\theta)\in SE(2)$, where $\mathbb{T}:=\{z\in\mathbb{C}:|z|=1\}$ is the circle group.

If $p>0$ then the irreducible unitary representation $U_p:SE(2)\to\mathcal{U}(L^2(\mathbb{S}^1))$ is defined by $g\mapsto U_p(g)$ where 
\begin{equation}\label{Upg}
[U_p(g)v](\mathbf{u}):=e^{\ii p\langle\mathbf{u},\mathbf{t}\rangle}v(\mathbf{R}_\theta^T\mathbf{u}),
\end{equation}
for every $g=(\mathbf{t},\mathbf{R}_\theta)\in SE(2)$, $v\in L^2(\mathbb{S}^1)$,  and $\mathbf{u}\in\mathbb{S}^1:=\{\mathbf{x}\in\mathbb{R}^2:\|\mathbf{x}\|_2=1\}$. \\

If $f\in L^1(SE(2))$ then the Fourier transform at $n\in\mathbb{Z}$ is given by 
\begin{equation}
\widehat{f}[n]:=\int_{SE(2)}f(g)\overline{\chi_n(g)}\dd g.
\end{equation}
If $f_j\in L^1(SE(2))$ with $j\in\{1,2\}$ then the convolution formula for $n\in\mathbb{Z}$ is 
\begin{equation}\label{star.n}
\widehat{f_1\star f_2}[n]=\widehat{f_1}[n]\widehat{f_2}[n].
\end{equation}

If $f\in L^1(SE(2))$ then the non-Abelian Fourier transform at $p>0$ is defined as an operator on $L^2(\mathbb{S}^1)$ given in the weak sense by the following integral  
\begin{equation}\label{Hfp}
\widehat{f}(p):=\int_{SE(2)}f(g)U_p(g^{-1})\dd g.
\end{equation}
If $f_j\in L^1(SE(2))$ with $j\in\{1,2\}$ then the convolution formula for $p>0$ reads as  
\begin{equation}\label{star.p}
\widehat{f_1\star f_2}(p)=\widehat{f_2}(p)\widehat{f_1}(p).
\end{equation}

Suppose that $(\mathbf{e}_k)_{k\in\mathbb{Z}}$ is the standard orthonormal basis sequence for $L^2(\mathbb{S}^1)$, where $\mathbf{e}_k:\mathbb{S}^1\to\mathbb{C}$
is defined by $\mathbf{e}_k(\mathbf{u}_\alpha):=e^{\ii k\alpha}$, 
 for $k\in\mathbb{Z}$ and $\mathbf{u}_\alpha=(\cos\alpha,\sin\alpha)\in\mathbb{S}^1$ with $\alpha\in[0,2\pi]$.
For every $g\in SE(2)$ and $p>0$, the matrix elements of the linear operator $U_p(g)$ according to the basis $(\mathbf{e}_k)_{k\in\mathbb{Z}}$, are expressed as 
\[
\mathrm{u}_{m,n}(g,p)=\langle U_p(g)\mathbf{e}_m,\mathbf{e}_n\rangle_{L^2(\mathbb{S}^1)},
\]
for every $n,m\in\mathbb{Z}$, where 
\[
\langle u,v\rangle_{L^2(\mathbb{S}^1)}:=\frac{1}{2\pi}\int_0^{2\pi}u(\alpha)\overline{v(\alpha)}\dd\alpha,\hspace{1cm}{\rm for}\ u,v\in L^2(\mathbb{S}^1).
\]
For every $q\ge 1$,  assume that $\mathcal{H}^q(0,\infty)$ is the Banach space consisting of all measurable fields of bounded linear operators $F$ on $(0,\infty)$ with
\[
\|F\|_{\mathcal{H}^q(0,\infty)}^q:=\int_0^\infty\|F(p)\|_q^qp\dd p<\infty,
\]
where for a bounded linear operator $T$, the Schatten $q$-norm of $T$
is $\|T\|_q^q:=\mathrm{tr}[|T|^q]$ and $|T|^2:=T^*T$, see \cite{Lip}.

The Parseval formula on $SE(2)$ associated to non-Abelian Fourier transform (\ref{Hfp})  is  
\[
\int_{SE(2)}|f(g)|^2\dd g=\int_0^\infty\|\widehat{f}(p)\|_{2}^2p\dd p,
\]
and the non-Abelian Fourier inversion formula is 
\begin{equation}\label{iNFT}
f(g)=\int_0^\infty\mathrm{tr}\left[\widehat{f}(p)U_p(g)\right]p\dd p,
\end{equation}
for $f\in L^1\cap L^2(SE(2))$ and $g\in SE(2)$, see \cite{M.Sugiu}. 

\subsection{Harmonic analysis on $\mathbb{Z}^2\backslash SE(2)$}
Let $\mathbb{Z}^2:=\{(\gamma_1,\gamma_2):\gamma_1,\gamma_2\in\mathbb{Z}\}$ be the group of translational isometries of the  orthogonal lattice in $\mathbb{R}^2$. Then $\mathbb{Z}^2$ is a discrete subgroup of $SE(2)$ 
with the counting measure as the Haar measure. Also,  the right coset space $\mathbb{Z}^2\backslash SE(2):=\{\mathbb{Z}^2 g:g\in SE(2)\}$ is compact as a homogeneous space which the Lie group $SE(2)$ acts on it from the right.
The classical aspects of abstract harmonic analysis on locally compact homogeneous spaces are quite well studied by several authors, see \cite{FollH,  AGHF.IJM, HR1, HR.JS} and classical references therein.

The function space $\mathcal{C}(\mathbb{Z}^2\backslash SE(2))$, that is the set of all continuous functions on $\mathbb{Z}^2\backslash SE(2)$, consists of all functions 
$\widetilde{f}$, where 
$f\in\mathcal{C}_c(SE(2))$ and
\begin{equation}\label{5.1}
\widetilde{f}(\mathbb{Z}^2 g):=\sum_{\gamma\in\mathbb{Z}^2}f(\gamma\circ g),
\end{equation}
for all $g\in SE(2)$,  where each $\gamma=(\gamma_1,\gamma_2)\in\mathbb{Z}^2$ considered as the element of $SE(2)$ of the form $g(\gamma_1,\gamma_2,0)$ in (\ref{gxyt}).

Let $\mu$ be a Radon measure on the right coset space $\mathbb{Z}^2\backslash SE(2)$ and $h\in SE(2)$. The right translation $\mu_h$ of $\mu$ is defined by $\mu_h(E):=\mu(E\circ h)$, for all Borel subsets $E$ of $\mathbb{Z}^2\backslash SE(2)$, where $E\circ h:=\{\mathbb{Z}^2 g\circ h:\mathbb{Z}^2 g\in E\}$.  
The measure $\mu$ is called $SE(2)$-invariant if $\mu_h=\mu$, for all $h\in SE(2)$.
Since $SE(2)$ is unimodular, $\mathbb{Z}^2$ is discrete and $\mathbb{Z}^2\backslash SE(2)$ is compact,  there exists a unique finite $SE(2)$-invariant measure $\mu$ on the right coset space $\mathbb{Z}^2\backslash SE(2)$ satisfying the following Weil's formula  
\begin{equation}\label{TH.m}
\int_{\mathbb{Z}^2\backslash SE(2)}\widetilde{f}(\mathbb{Z}^2 g)\dd\mu(\mathbb{Z}^2 g)=\int_{SE(2)}f(g)\dd g,
\end{equation}
for every $f\in L^1(SE(2))$, see Theorem 2.49 of \cite{FollH}. In this case,  $\mu$ is called as the normalized $SE(2)$-invariant measure on the right coset space $\mathbb{Z}^2\backslash SE(2)$.

\section{\bf{Non-Abelian Fourier analysis in $L^2(\mathbb{Z}^2\backslash SE(2))$}}\label{sec:3}

Throughout this section,  we discuss non-Abelian Fourier analysis for square integrable functions on the right coset space of the orthogonal lattice $\mathbb{Z}^2$ in $SE(2)$.  

Assume that $\mu$ is the finite $SE(2)$-invariant measure on the right coset space $\mathbb{Z}^2\backslash SE(2)$ normalized with respect to Weil's formula (\ref{TH.m}).  Then the inner-product in $L^2(\mathbb{Z}^2\backslash SE(2),\mu)$ is given by 
\[
\langle\psi,\varphi\rangle:=\int_{\mathbb{Z}^2\backslash SE(2)}\psi(\mathbb{Z}^2g)\overline{\varphi(\mathbb{Z}^2g)}\dd\mu(\mathbb{Z}^2 g),\hspace{1cm}{\rm for}\ \ \psi,\varphi\in L^2(\mathbb{Z}^2\backslash SE(2),\mu).
\]
Next we present a generalization of the formula (\ref{Fourier.C.PAMQ}) for a larger class of functions.

\begin{theorem}\label{Main.L2.coeff}
Assume $f\in L^1\cap L^2(SE(2))$ such that $\widehat{f}\in\mathcal{H}^1(0,\infty)$ and $\widetilde{f}\in L^2(\mathbb{Z}^2\backslash SE(2))$.  Suppose $\Omega\subset SE(2)$ is a fundamental domain of $\mathbb{Z}^2$ in $SE(2)$
and $\varphi\in\mathcal{C}(\mathbb{Z}^2\backslash SE(2))$.  Then  
\begin{equation}\label{FourierInvQ0.L2.coeff}
\langle\widetilde{f},\varphi\rangle=\sum_{\gamma\in\mathbb{Z}^2}\int_0^\infty\mathrm{tr}\left[\widehat{f}(p)Q_{\gamma\circ\Omega}^{\overline{\varphi}}(p)\right]p\dd p,
\end{equation}
where 
\[
Q^{\overline{\varphi}}_{\gamma\circ\Omega}(p):=\int_{\gamma\circ\Omega}\overline{\varphi(\mathbb{Z}^2 g)}U_p(g)\dd g\hspace{1cm}{\rm for}\ p>0.
\]
\end{theorem}
\begin{proof}
Since $f\in L^1\cap L^2(SE(2))$,  using the formula (\ref{iNFT}), we have 
\begin{equation}\label{alt.in.main.K}
f(g)=\int_0^\infty\mathrm{tr}\left[\widehat{f}(p)U_p(g)\right]p\dd p,
\end{equation}
for $g\in SE(2)$. Therefore, using Weil's formula and (\ref{alt.in.main.K}), we get 
\begin{align*}
\langle\widetilde{f},\varphi\rangle&=\int_{\mathbb{Z}^2\backslash SE(2)}\widetilde{f}(\mathbb{Z}^2 g)
\overline{\varphi(\mathbb{Z}^2 g)}\dd\mu(\mathbb{Z}^2 g)
=\int_{SE(2)}f(g)\overline{\varphi(\mathbb{Z}^2 g)}\dd g
\\&=\sum_{\gamma\in\mathbb{Z}^2}\int_{\Omega}f(\gamma\circ\omega)\overline{\varphi(\mathbb{Z}^2 \omega)}\dd\omega=\sum_{\gamma\in\mathbb{Z}^2}\int_{\gamma\circ\Omega}f(g)\overline{\varphi(\mathbb{Z}^2 g)}\dd g
\\&=\sum_{\gamma\in\mathbb{Z}^2}\int_{\gamma\circ\Omega}\left(\int_0^\infty\mathrm{tr}\left[\widehat{f}(p)U_p(g)\right]p\dd p\right)\overline{\varphi(\mathbb{Z}^2 g)}\dd g.
\end{align*}
Let ${\mathbf{\gamma}}\in\mathbb{Z}^2$ be given.  Invoking the assumption $\widehat{f}\in\mathcal{H}^1(0,\infty)$,  we achieve that 
\begin{align*}
\int_{\gamma\circ\Omega}\int_0^\infty\hspace{-0.1cm}|\mathrm{tr}\left[\widehat{f}(p)U_p(g)\right]||\varphi(\mathbb{Z}^2 g)|p\dd p\dd g
&\le\int_{\gamma\circ\Omega}\int_0^\infty\|\widehat{f}(p)U_p(g)\|_1|\varphi(\mathbb{Z}^2 g)|p\dd p\dd g
\\&\le\int_{\gamma\circ\Omega}\int_0^\infty\|\widehat{f}(p)\|_1\|U_p(g)\||\varphi(\mathbb{Z}^2 g)|p\dd p\dd g 
\\&=\int_{\gamma\circ\Omega}\int_0^\infty\|\widehat{f}(p)\|_1|\varphi(\mathbb{Z}^2 g)|p\dd p\dd g
\\&=\|\widehat{f}\|_{\mathcal{H}^1}\hspace{-0.1cm}\int_{\gamma\circ\Omega}\hspace{-0.1cm}|\varphi(\mathbb{Z}^2 g)|\dd g
\hspace{-0.1cm}=\|\widehat{f}\|_{\mathcal{H}^1}\|\varphi\|_{1}\hspace{-0.1cm}<\infty,
\end{align*}
implying that  
\[
\int_{\gamma\circ\Omega}\hspace{-0.1cm}\left(\int_0^\infty\mathrm{tr}\left[\widehat{f}(p)U_p(g)\right]p\dd p\right)\overline{\varphi(\mathbb{Z}^2 g)}\dd g
=\int_0^\infty\hspace{-0.1cm}\left(\int_{\gamma\circ\Omega}\mathrm{tr}\left[\widehat{f}(p)U_p(g)\right]\overline{\varphi(\mathbb{Z}^2 g)}\dd g\right)p\dd p.
\]
Thus, we achieve 
\begin{equation}\label{mainT.alt0.K}
\langle\widetilde{f},\varphi\rangle=\sum_{\gamma\in\mathbb{Z}^2}\int_0^\infty\left(\int_{\gamma\circ\Omega}\mathrm{tr}\left[\widehat{f}(p)U_p(g)\right]\overline{\varphi(\mathbb{Z}^2 g)}\dd g\right)p\dd p.
\end{equation}
In addition,  $\widehat{f}\in\mathcal{H}^1(0,\infty)$ guarantees that the non-Abelian Fourier integral $\widehat{f}(p)$ is a trace-class operator for a.e. $p>0$. So,  we obtain   
\begin{align*}
\int_{\gamma\circ\Omega}\sum_{m\in\mathbb{Z}}|\varphi(\mathbb{Z}^2 g)||\langle\widehat{f}(p)  U_p(g)\mathbf{e}_m,\mathbf{e}_m\rangle|\dd g
&=\hspace{-0.1cm}\int_{\gamma\circ\Omega}\hspace{-0.1cm}|\varphi(\mathbb{Z}^2 g)|\hspace{-0.05cm}\left(\sum_{m\in\mathbb{Z}}|\langle\widehat{f}(p)  U_p(g)\mathbf{e}_m,\mathbf{e}_m\rangle|\hspace{-0.1cm}\right)\hspace{-0.1cm}\dd g
\\&\le\int_{\gamma\circ\Omega}|\varphi(\mathbb{Z}^2 g)|\|\widehat{f}(p)  U_p(g)\|_1\dd g
\\&\le\|\widehat{f}(p)\|_1\left(\int_{\gamma\circ\Omega}|\varphi(\mathbb{Z}^2 g)|\dd g\right)<\infty,
\end{align*}
for a.e. $p>0$. Thus,  by Fubini's Theorem,  we get   
\begin{align*}
\mathrm{tr}\left[\widehat{f}(p)Q_{\gamma\circ\Omega}^{\overline{\varphi}}(p)\right]
&=\sum_{m\in\mathbb{Z}}\langle\widehat{f}(p)Q_{\gamma\circ\Omega}^{\overline{\varphi}}(p)\mathbf{e}_m,\mathbf{e}_m\rangle
=\sum_{m\in\mathbb{Z}}\langle Q_{\gamma\circ\Omega}^{\overline{\varphi}}(p)\mathbf{e}_m,\widehat{f}(p)^*\mathbf{e}_m\rangle
\\&=\sum_{m\in\mathbb{Z}}\int_{\gamma\circ\Omega}\hspace{-0.1cm}\overline{\varphi(\mathbb{Z}^2 g)}\langle U_p(g)\mathbf{e}_m,\widehat{f}(p)^*\mathbf{e}_m\rangle \dd g=\sum_{m\in\mathbb{Z}}\int_{\gamma\circ\Omega}\hspace{-0.1cm}\overline{\varphi(\mathbb{Z}^2 g)}\langle\widehat{f}(p)  U_p(g)\mathbf{e}_m,\mathbf{e}_m\rangle\dd g
\\&=\int_{\gamma\circ\Omega}\sum_{m\in\mathbb{Z}} \overline{\varphi(\mathbb{Z}^2 g)}\langle\widehat{f}(p)  U_p(g)\mathbf{e}_m,\mathbf{e}_m\rangle\dd g
=\int_{\gamma\circ\Omega}\hspace{-0.1cm}\overline{\varphi(\mathbb{Z}^2 g)}\left(\sum_{m\in\mathbb{Z}}\langle\widehat{f}(p)  U_p(g)\mathbf{e}_m,\mathbf{e}_m\rangle\right)\dd g
\\&=\int_{\gamma\circ\Omega}\overline{\varphi(\mathbb{Z}^2 g)}\mathrm{tr}\left[\widehat{f}(p)U_p(g)\right]\dd g,
\end{align*}
for a.e. $p\in (0,\infty)$. Then, using (\ref{mainT.alt0.K}), we conclude  
\begin{align*}
\langle\widetilde{f},\varphi\rangle
&=\sum_{\gamma\in\mathbb{Z}^2}\hspace{-0.1cm}\int_0^\infty\hspace{-0.15cm}\left(\int_{\gamma\circ\Omega}\hspace{-.2cm}\mathrm{tr}\left[\widehat{f}(p)U_p(g)\right]\overline{\varphi(\mathbb{Z}^2 g)}\dd g\hspace{-.1cm}\right)\hspace{-.1cm}p\dd p
=\sum_{{\gamma}\in\mathbb{Z}^2}\hspace{-.1cm}\int_0^\infty\hspace{-.2cm}\mathrm{tr}\left[\widehat{f}(p)Q_{\gamma\circ\Omega}^{\overline{\varphi}}(p)\right]p\dd p.
\end{align*}
\end{proof}

We then conclude the following version of the formula (\ref{FourierInvQ0.L2.coeff}) in terms of matrix elements. 

\begin{theorem}\label{MainMat}
Suppose $f\in L^1\cap L^2(SE(2))$ such that $\widehat{f}\in\mathcal{H}^1(0,\infty)$ and $\widetilde{f}\in L^2(\mathbb{Z}^2\backslash SE(2))$.  Let $\Omega\subset SE(2)$ be a fundamental domain of $\mathbb{Z}^2$ in $SE(2)$
and $\varphi\in\mathcal{C}(\mathbb{Z}^2\backslash SE(2))$.  Then  
\begin{equation}
\langle\widetilde{f},\varphi\rangle=\sum_{\gamma\in\mathbb{Z}^2}\sum_{m\in\mathbb{Z}}\sum_{n\in\mathbb{Z}}\int_0^\infty\widehat{f}(p)_{n,m}Q_{\gamma\circ\Omega}^{\overline{\varphi}}(p)_{m,n}p\dd p,
\end{equation}
where for $p>0$ we have 
\[
Q^{\overline{\varphi}}_{\gamma\circ\Omega}(p)_{m,n}:=\int_{\gamma\circ\Omega}\overline{\varphi(\mathbb{Z}^2 g)}\mathrm{u}_{m,n}(g;p)\dd g.
\]
\end{theorem}
\begin{proof}
Invoking Theorem \ref{Main.L2.coeff}, we have 
\begin{align*}
\langle\widetilde{f},\varphi\rangle
&=\sum_{{\gamma}\in\mathbb{Z}^2}\int_0^\infty\mathrm{tr}\left[\widehat{f}(p)Q_{\gamma\circ\Omega}^{\overline{\varphi}}(p)\right]p\dd p
\\&=\sum_{\gamma\in\mathbb{Z}^2}\int_0^\infty\sum_{m\in\mathbb{Z}}\sum_{n\in\mathbb{Z}}\widehat{f}(p)_{n,m}Q_{\gamma\circ\Omega}^{\overline{\varphi}}(p)_{m,n}p\dd p
=\sum_{\gamma\in\mathbb{Z}^2}\sum_{m\in\mathbb{Z}}\sum_{n\in\mathbb{Z}}\int_0^\infty\widehat{f}(p)_{n,m}Q_{\gamma\circ\Omega}^{\overline{\varphi}}(p)_{m,n}p\dd p.
\end{align*}
\end{proof}

\subsection{The trigonometric ONB for $L^2(\mathbb{Z}^2\backslash SE(2))$}\label{TrigB}

In this section, we study the concrete orthonormal basis for the Hilbert function space $L^2(\mathbb{Z}^2\backslash SE(2))$ using trigonometric polynomials.

Suppose that $\Omega:=[0,1)^2\times[0,2\pi)$.  Then, $\Omega$ is a fundamental domain for $\mathbb{Z}^2$ in $SE(2)$. We then introduce the orthonormal basis consists of trigonometric polynomials for square integrable functions on $\Omega$. 

For every integral vector $\mathbf{k}=(k_1,k_2,k_3)\in\mathbb{Z}^3$, define the function 
$\phi_\mathbf{k}:\Omega\to\mathbb{C}$ by 
\begin{equation}
\phi_\mathbf{k}(x,y,\theta):=\sqrt{2\pi}\exp\left(2\pi \ii(k_1x+k_2y)\right)\exp(\ii k_3\theta),\hspace{1cm}\ {\rm for}\ (x,y,\theta)\in\Omega.
\end{equation}

\begin{proposition}
Let $\Omega=[0,1)^2\times[0,2\pi)$. Then, $\mathcal{E}:=(\phi_\mathbf{k})_{\mathbf{k}\in\mathbb{Z}^3}$ is an orthonormal basis for the Hilbert function space $L^2(\Omega,\dd g)$.
\end{proposition}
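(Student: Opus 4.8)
The plan is to recognize both the measure and the basis functions as products, thereby reducing the statement to the classical completeness of one-dimensional Fourier series. First I would observe that the normalized measure on $\Omega$ factors as $(2\pi)^{-1}\,dx\,dy\,d\theta = dx\otimes dy\otimes (2\pi)^{-1}d\theta$, so that
\[
L^2\!\left(\Omega,(2\pi)^{-1}dxdyd\theta\right)\cong L^2([0,1),dx)\,\widehat{\otimes}\, L^2([0,1),dy)\,\widehat{\otimes}\, L^2([0,2\pi),(2\pi)^{-1}d\theta)
\]
as Hilbert spaces, and that each basis function factors accordingly as
\[
\phi_{\mathbf{k}}(x,y,\theta)=e^{2\pi\ii k_1 x}\cdot e^{2\pi\ii k_2 y}\cdot e^{\ii k_3\theta}.
\]

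Next I would verify orthonormality directly. Because the integrand separates, the inner product $\langle\phi_{\mathbf{k}},\phi_{\mathbf{k}'}\rangle$ factors into a product of three one-dimensional integrals,
\[
\left(\int_0^1 e^{2\pi\ii(k_1-k_1')x}dx\right)\left(\int_0^1 e^{2\pi\ii(k_2-k_2')y}dy\right)\left(\frac{1}{2\pi}\int_0^{2\pi}e^{\ii(k_3-k_3')\theta}d\theta\right),
\]
each of which equals $1$ when the relevant indices agree and $0$ otherwise. Hence $\langle\phi_{\mathbf{k}},\phi_{\mathbf{k}'}\rangle=\delta_{\mathbf{k},\mathbf{k}'}$, and in particular each $\phi_{\mathbf{k}}$ is a unit vector; note that the normalization of the $\theta$-measure by $(2\pi)^{-1}$ is exactly what forces $e^{\ii k_3\theta}$ to have unit norm.

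The substantive step is completeness, and this is where the real content lies. I would invoke the standard fact that if $\{u_i\}$ and $\{v_j\}$ are orthonormal bases of Hilbert spaces $H_1$ and $H_2$, then $\{u_i\otimes v_j\}$ is an orthonormal basis of the Hilbert space tensor product $H_1\,\widehat{\otimes}\,H_2$. Applying this twice reduces the claim to the one-dimensional assertions that $\{x\mapsto e^{2\pi\ii kx}:k\in\mathbb{Z}\}$ is an orthonormal basis of $L^2([0,1),dx)$ and $\{\theta\mapsto e^{\ii k\theta}:k\in\mathbb{Z}\}$ is an orthonormal basis of $L^2([0,2\pi),(2\pi)^{-1}d\theta)$. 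These two systems coincide up to the affine change of variable $\theta=2\pi t$, and their completeness is precisely the classical completeness of Fourier series, provable, for instance, via density of trigonometric polynomials in the continuous functions (Stone--Weierstrass, or Fej\'er's theorem) followed by density of continuous functions in $L^2$.

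I expect the only genuine obstacle to be this one-dimensional completeness; everything else is bookkeeping. If one prefers a self-contained route that avoids explicitly quoting the tensor-product-of-bases theorem, I would instead show directly that any $h\in L^2(\Omega,(2\pi)^{-1}dxdyd\theta)$ orthogonal to every $\phi_{\mathbf{k}}$ must vanish: by Fubini the vanishing of all coefficients forces, for almost every fixed pair of variables, the vanishing of all one-variable Fourier coefficients of the corresponding slice, whence each slice is zero almost everywhere by the uniqueness theorem for Fourier coefficients, and an iteration over the three variables yields $h=0$.
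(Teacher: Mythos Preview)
Your proposal is correct and follows essentially the same approach as the paper: both verify orthonormality by separating the inner product into the same three one-dimensional integrals and obtaining $\delta_{\mathbf{k},\mathbf{k}'}$. For completeness the paper simply declares it ``straightforward,'' whereas you spell out the tensor-product (or Fubini) reduction to the classical one-dimensional Fourier basis, which is exactly the argument the paper is gesturing toward.
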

\begin{proof}
Let $\mathbf{n}:=(n_1,n_2,n_3),\mathbf{m}:=(m_1,m_2,m_3)\in\mathbb{Z}^3$ be integral vectors. 
We then have
\begin{align*}
&\langle\phi_{\mathbf{n}},\phi_{\mathbf{m}}\rangle_{L^2(\Omega,\dd g)}
=\frac{1}{4\pi^2}\int_0^1\int_0^1\int_0^{2\pi}\phi_{\mathbf{n}}(x,y,\theta)\overline{\phi_{\mathbf{m}}(x,y,\theta)}\dd x\dd y\dd\theta
\\&=\frac{1}{2\pi}\int_0^1\int_0^1\int_0^{2\pi}\exp\left(2\pi \ii((n_1-m_1)x+(n_2-m_2)y)\right)\exp(\ii (n_3-m_3)\theta)\dd x\dd y\dd\theta
\\&=\left(\frac{1}{2\pi}\int_0^{2\pi}\exp(\ii (n_3-m_3)\theta)\dd\theta\right)\int_0^1\int_0^1\exp\left(2\pi \ii((n_1-m_1)x+(n_2-m_2)y)\right)\dd x\dd y
\\&=\delta_{n_3,m_3}\left(\int_0^1\exp\left(2\pi \ii(n_1-m_1)x\right)\dd x\right)\left(\int_0^1\exp\left(2\pi \ii(n_2-m_2)y\right)\dd y\right)
\\&=\delta_{n_1,m_1}\delta_{n_2,m_2}\delta_{n_3,m_3}=\delta_{\mathbf{n},\mathbf{m}}.
\end{align*} 
In addition,  completeness of $\mathcal{E}$ in $L^2(\Omega,\dd g)$ is straightforward.
\end{proof}

Next we show that integrals on the coset space $\mathbb{Z}^2\backslash SE(2)$ can be computed via integration on $\Omega$.

\begin{lemma}\label{intO}
Let $\mu$ be the normalized $SE(2)$-invariant measure on the right coset space $\mathbb{Z}^2\backslash SE(2)$ and $\Omega:=[0,1)^2\times[0,2\pi)$.  Suppose $\psi\in L^1(\mathbb{Z}^2\backslash SE(2),\mu)$. Then 
\[
\int_{\mathbb{Z}^2\backslash SE(2)}\psi(\mathbb{Z}^2 g)\dd\mu(\mathbb{Z}^2 g)=\int_{\Omega}\psi(\mathbb{Z}^2g)\dd g.
\]
\end{lemma}
\begin{proof}
Suppose that $E_\Omega$ is the characteristic function of $\Omega$. 
Since $\Omega$ is a fundamental domain of $\mathbb{Z}^2$ in $SE(2)$,  we get $\sum_{\gamma\in\mathbb{Z}^2}E_\Omega(\gamma\circ g)=1$,
for every $g\in SE(2)$.  Therefore,  using Weil's formula (\ref{TH.m}), we obtain 
\begin{align*}
\int_{\Omega}\psi(\mathbb{Z}^2g)\dd g
&=\hspace{-0.05cm}\int_{SE(2)}\hspace{-0.05cm}E_{\Omega}(g)\psi(\mathbb{Z}^2g)\dd g.
\\&=\hspace{-0.05cm}\int_{\mathbb{Z}^2\backslash SE(2)}\hspace{-0.15cm}\left(\hspace{-0.05cm}\sum_{\gamma\in\mathbb{Z}^2}\hspace{-0.15cm}E_\Omega(\gamma\circ g)\hspace{-0.05cm}\right)\psi(\mathbb{Z}^2 g)\dd\mu(\mathbb{Z}^2 g)
=\hspace{-0.15cm}\int_{\mathbb{Z}^2\backslash SE(2)}\hspace{-0.15cm}\psi(\mathbb{Z}^2 g)\dd\mu(\mathbb{Z}^2 g).
\end{align*}
\end{proof}

For $\mathbf{k}\in\mathbb{Z}^3$, one can consider the function $\phi_\mathbf{k}$ as a function on the right coset space $\mathbb{Z}^2\backslash SE(2)$. From, now on we shall denote these functions by $\psi_\mathbf{k}$. In details,  
\begin{equation}\label{MainBasisFun}
\psi_\mathbf{k}(\mathbb{Z}^2 g(x,y,\theta)):=\phi_\mathbf{k}(x,y,\theta)=\sqrt{2\pi}\exp\left(2\pi \ii(k_1x+k_2y)\right)\exp(\ii k_3\theta),
\end{equation}
for all $(x,y,\theta)\in\Omega$ and $\mathbf{k}:=(k_1,k_2,k_3)\in\mathbb{Z}^3$.

We then conclude this section by the following results about the structure of the Hilbert space $L^2(\mathbb{Z}^2\backslash SE(2),\mu)$.

\begin{proposition}
{\it Let $\mu$ be the normalized $SE(2)$-invariant measure on the right coset space $\mathbb{Z}^2\backslash SE(2)$ and $\Omega:=[0,1)^2\times[0,2\pi)$.  
\begin{enumerate}
\item The Hilbert function spaces $L^2(\mathbb{Z}^2\backslash SE(2),\mu)$ and $L^2(\Omega,\dd g)$ are isometric isomorphic.
\item $(\psi_\mathbf{k})_{\mathbf{k}\in\mathbb{Z}^3}$ is an orthonormal basis for the Hilbert space $L^2(\mathbb{Z}^2\backslash SE(2),\mu)$. 
\end{enumerate}
}\end{proposition}
\begin{proof}
(1) Suppose  $H:L^2(\mathbb{Z}^2\backslash SE(2),\mu)\to L^2(\Omega,\dd g)$ is given by $\psi\mapsto H\psi$, where 
\[
H\psi(x,y,\theta):=\psi(\mathbb{Z}^2g(x,y,\theta)),
\]
for all $\psi\in L^2(\mathbb{Z}^2\backslash SE(2),\mu)$ and $(x,y,\theta)\in\Omega$. Then using Lemma \ref{intO},  $H$ is a linear unitary operator, i.e. it is an isometric isomorphism of Hilbert spaces.

(2) It is straightforward from (1). 
\end{proof}

\section{\bf{Matrix elements of non-Abelian Fourier integral}}
\label{sec:4}
In this section, we study analytical aspects of non-Abelian Fourier integral matrix element.

In polar coordinates, the matrix elements $\mathrm{u}_{m,n}(g;p)$ and $\widehat{f}(p)_{n,m}$ can be computed via the following closed forms.

\begin{proposition}
{\it Let $m,n\in\mathbb{Z}$ and $p>0$. 
\begin{enumerate}
\item For every $g(r,\phi,\theta)\in SE(2)$, we have 
\begin{equation}\label{upmat.mn}
\mathrm{u}_{m,n}(g(r,\phi,\theta);p)=\ii^{n-m}e^{-\ii(m\theta+(n-m)\phi)}J_{n-m}(pr).
\end{equation}
\item For every $f\in L^1(SE(2))$, we have 
\begin{equation}\label{fnm}
\widehat{f}(p)_{n,m}=\frac{\ii^{m-n}}{4\pi^2}\int_{0}^{2\pi}\hspace{-0.15cm}\int_0^{2\pi}\hspace{-0.15cm}\int_0^\infty\hspace{-0.15cm} f(r,\phi,\theta)e^{\ii(m\theta+(n-m)\phi)}J_{n-m}(rp)r\dd r\dd\phi\dd\theta.
\end{equation}
\end{enumerate}
}\end{proposition}
\begin{proof}
(1) Applying (\ref{Upg}),  in polar coordinates, we obtain 
\begin{equation}\label{up.em}
[U_p(g(r,\phi,\theta))\mathbf{e}_m](\alpha)=e^{\ii pr\langle\mathbf{u}_\alpha,\mathbf{u}_\phi\rangle}\mathbf{e}_m(\alpha-\theta)=e^{\ii pr\cos(\alpha-\phi)}e^{-\ii m\theta}e^{\ii m\alpha}.
\end{equation}
Using (\ref{B.i.R}) and (\ref{up.em}),  we achieve  
\begin{align*}
\mathrm{u}_{m,n}(g(r,\phi,\theta);p)
&=\frac{1}{2\pi}\hspace{-0.1cm}\int_0^{2\pi}\hspace{-0.15cm}[U_p(g(r,\phi,\theta))\mathbf{e}_m](\alpha)\overline{\mathbf{e}_n(\alpha)}\dd\alpha
=\frac{e^{-\ii m\theta}}{2\pi}\hspace{-0.1cm}\int_0^{2\pi}\hspace{-0.2cm}e^{\ii pr\cos(\alpha-\phi)}e^{\ii(m-n)\alpha}\dd\alpha
\\&=\frac{e^{-\ii m\theta}e^{\ii(m-n)\phi}}{2\pi}\int_0^{2\pi}e^{\ii pr\cos\alpha}e^{-\ii(n-m)\alpha}\dd\alpha
=\ii^{n-m}e^{-\ii(m\theta+(n-m)\phi)}J_{n-m}(pr).
\end{align*}
(2) Invoking (1), we get 
\begin{align*}
\widehat{f}(p)_{n,m}
&=\langle\widehat{f}(p)\mathbf{e}_n,\mathbf{e}_m\rangle_{L^2(\mathbb{S}^1)}=\int_{SE(2)}f(g)\langle U_{p}(g^{-1})\mathbf{e}_n,\mathbf{e}_m\rangle\dd g
\\&=\int_{SE(2)}f(g)\langle\mathbf{e}_n, U_{p}(g)\mathbf{e}_m\rangle \dd g=\int_{SE(2)}f(g)\overline{\mathrm{u}_{m,n}(g;p)}\dd g
\\&=\frac{1}{4\pi^2}\int_{0}^{2\pi}\int_0^{2\pi}\int_0^\infty f(r,\phi,\theta)\overline{\mathrm{u}_{m,n}(g(r,\phi,\theta);p)}r\dd r\dd\phi\dd\theta
\\&=\frac{\ii^{m-n}}{4\pi^2}\int_{0}^{2\pi}\int_0^{2\pi}\int_0^\infty f(r,\phi,\theta)e^{\ii(m\theta+(n-m)\phi)}J_{n-m}(pr)r\dd r\dd\phi\dd\theta.
\end{align*}
\end{proof}

For every $f\in L^1(SE(2))$ and $m,n\in\mathbb{Z}$, let $\widehat{f}_{n,m}:(0,\infty)\to\mathbb{C}$ be the function given by $p\mapsto\widehat{f}(p)_{n,m}$. 
Then applying the closed form (\ref{fnm}),  one can investigate analytical properties of matrix elements $\widehat{f}_{n,m}$.  

\begin{proposition}\label{fnm.cts.bd}
{\it Let $f\in L^1(SE(2))$,  and $m,n\in\mathbb{Z}$. Then 
\begin{enumerate}
\item $\widehat{f}_{n,m}\in\mathcal{C}(\mathbb{R}^*_+)$.
\item $\lim_{p\to 0^+}\widehat{f}(p)_{n,m}=\delta_{n,m}\widehat{f}[-m]=\delta_{n,m}\widehat{f}[-n]$.
\item $\|\widehat{f}_{n,m}\|_\infty\le \|f\|_{L^1(SE(2))}\|J_{n-m}\|_\infty$.
\end{enumerate}
}\end{proposition}
\begin{proof}
(1) Suppose that $p\in(0,\infty)$ and $(p_k)\subset (0,\infty)$ is a sequence converging to $p$.  Using (\ref{fnm}), we have 
\[
\widehat{f}(p_k)_{n,m}=\frac{\ii^{m-n}}{4\pi^2}\int_{0}^{2\pi}\int_0^{2\pi}\int_0^\infty f(r,\phi,\theta)e^{\ii(m\theta+(n-m)\phi)}J_{n-m}(rp_k)r\dd r\dd\phi\dd\theta.
\] 
 For every $k\in\mathbb{N}$, let 
\[
f_k(r,\phi,\theta):=\frac{\ii^{m-n}}{4\pi^2} f(r,\phi,\theta)e^{\ii(m\theta+(n-m)\phi)}J_{n-m}(rp_k).
\]
Then $(f_k)\subset L^1(SE(2))$ and $|f_k|\le|f|$ for every $k\in\mathbb{N}$.  In addition,  
\[
\lim_kf_k(r,\phi,\theta)=\frac{\ii^{m-n}}{4\pi^2} f(r,\phi,\theta)e^{\ii(m\theta+(n-m)\phi)}J_{n-m}(rp).
\]
So, using dominated convergence theorem, we get 
\begin{align*}
\lim_k\widehat{f}(p_k)_{n,m}=\lim_k\int_{SE(2)}f_k(g)\dd g=\widehat{f}(p)_{n,m}.
\end{align*}
(2) Applying dominated convergence theorem in (\ref{fnm}),  we get 
\begin{align*}
\lim_{p\to 0^+}\widehat{f}(p)_{n,m}&=\frac{\ii^{m-n}}{4\pi^2}\lim_{p\to 0^+}\int_{0}^{2\pi}\int_0^{2\pi}\int_0^\infty f(r,\phi,\theta)e^{\ii(m\theta+(n-m)\phi)}J_{n-m}(pr)r\dd r\dd\phi\dd\theta
\\&=\frac{\ii^{m-n}}{4\pi^2}\hspace{-0.12cm}\int_{0}^{2\pi}\hspace{-0.12cm}\int_0^{2\pi}\hspace{-0.12cm}\int_0^\infty\hspace{-0.2cm}f(r,\phi,\theta)e^{\ii(m\theta+(n-m)\phi)}\left(\lim_{p\to 0^+}J_{n-m}(pr)\right)r\dd r\dd\phi\dd\theta
\\&=J_{n-m}(0)\frac{\ii^{m-n}}{4\pi^2}\int_{0}^{2\pi}\int_0^{2\pi}\int_0^\infty f(r,\phi,\theta)e^{\ii(m\theta+(n-m)\phi)}r\dd r\dd\phi\dd\theta
\\&=\delta_{n,m}\frac{\ii^{m-n}}{4\pi^2}\int_{0}^{2\pi}\int_0^{2\pi}\int_0^\infty f(r,\phi,\theta)e^{\ii(m\theta+(n-m)\phi)}r\dd r\dd\phi\dd\theta
\\&=\frac{\delta_{n,m}}{4\pi^2}\int_{0}^{2\pi}\int_0^{2\pi}\int_0^\infty\hspace{-0.1cm} f(r,\phi,\theta)e^{\ii m\theta}r\dd r\dd\phi\dd\theta=\delta_{n,m}\widehat{f}[-m]=\delta_{n,m}\widehat{f}[-n].
\end{align*} 
(3) Suppose that $p>0$.  Then,  using (\ref{fnm}),  we have 
\begin{align*}
|\widehat{f}(p)_{n,m}|&\le \frac{1}{4\pi^2}\int_{0}^{2\pi}\int_0^{2\pi}\int_0^\infty |f(r,\phi,\theta)||J_{n-m}(rp)|r\dd r\dd\phi\dd\theta
\\&\le \frac{\|J_{n-m}\|_\infty}{4\pi^2}\int_{0}^{2\pi}\int_0^{2\pi}\int_0^\infty |f(r,\phi,\theta)|r\dd r\dd\phi\dd\theta\le \|f\|_{L^1(SE(2))}\|J_{n-m}\|_\infty.
\end{align*}
So, we obtain 
\[
\|\widehat{f}_{n,m}\|_\infty=\sup_{p\in(0,\infty)}|\widehat{f}(p)_{n,m}|\le\|f\|_{L^1(SE(2))}\|J_{n-m}\|_\infty.
\]
\end{proof}

\begin{lemma}\label{J2}
Let $p\in(0,\infty)$ be given.  Then 
\[
\sum_{n\in\mathbb{Z}}|J_n(p)|^2=1.
\]
\end{lemma}
\begin{proof}
Let $u_p:[0,2\pi]\to\mathbb{C}$ be given by 
$u_p(\theta):=e^{\ii p\cos\theta}$.  Then $u_p:[0,2\pi]\to\mathbb{C}$ is continuous and $u_p\in L^2[0,2\pi]$. 
Also,   using integral representation of Bessel functions, formula (\ref{B.i.R}), one can see that $|\widehat{u_p}(n)|=|J_n(p)|$ for every $n\in\mathbb{Z}$. Since $|u_p(\theta)|=1$ for every $\theta\in[0,2\pi]$,  Parseval's formula implies 
\[
\sum_{n\in\mathbb{Z}}|J_n(p)|^2=\sum_{n\in\mathbb{Z}}|\widehat{u_p}(n)|^2=\|u_p\|_2^2=\frac{1}{2\pi}\int_0^{2\pi}|u_p(\theta)|^2\dd\theta=1.
\]
\end{proof}

A function $f\in L^1(SE(2))$ is called {\it admissible} if 
\[
\beta(f):=(\beta_m(f))_{m\in\mathbb{Z}}\in\ell^\infty(\mathbb{Z}),
\]
where $\beta_m(f):=\sum_{n\in\mathbb{Z}}\|\widehat{f}_{n,m}\|_\infty$, 
for every $m\in\mathbb{Z}$. In this case,  we define 
\[
\beta_\infty(f):=\|\beta(f)\|_{\ell^\infty}=\sup_{m\in\mathbb{Z}}\beta_m(f). 
\]
The following proposition presents a connection of differentiability on $SE(2)$ and admissibility. 

\begin{proposition}
{\it Let $f\in L^1(SE(2))$ such that $\partial_\phi f\in L^1(SE(2))$.  Then $f$ is admissible and  
\[
\beta_\infty(f)\le\|f\|_1+ \frac{\pi}{\sqrt{3}}\|\partial_\phi f\|_1.
\] 
}\end{proposition}
\begin{proof}
Suppose $m\in\mathbb{Z}$ is given.  Using (\ref{upmat.mn}) and integration by parts with respect to $\phi$, we get  
\begin{align*}
|\widehat{f}(p)_{n,m}|
&=\frac{|\widehat{\partial_\phi f}(p)_{n,m}|}{|n-m|}
\le\frac{1}{4\pi^2|n-m|}\int_{0}^{2\pi}\int_0^{2\pi}\int_0^\infty\hspace{-0.05cm}|\partial_\phi f(r,\phi,\theta)||J_{n-m}(rp)|r\dd r\dd\phi\dd\theta
\\&\le\frac{\|J_{n-m}\|_\infty}{4\pi^2|n-m|}\int_{0}^{2\pi}\int_0^{2\pi}\int_0^\infty |\partial_\phi f(r,\phi,\theta)|r\dd r\dd\phi\dd\theta
=\frac{\|\partial_\phi f\|_1\|J_{n-m}\|_\infty}{|n-m|},
\end{align*}
for every $p>0$ and $n\not=m$. Therefore, we obtain 
\[
\|\widehat{f}_{n,m}\|_\infty\le \frac{\|\partial_\phi f\|_1\|J_{n-m}\|_\infty}{|n-m|},
\]
for every $n\not=m$.  Then using Cauchy-Schwartz inequality, we achieve 
\begin{align*}
\sum_{n\not=m}\|\widehat{f}_{n,m}\|_\infty
&\le\|\partial_\phi f\|_1\sum_{n\not=m}\frac{\|J_{n-m}\|_\infty}{|n-m|}\\&\le\|\partial_\phi f\|_1\left(\sum_{n\not=m}\|J_{n-m}\|_\infty^2\right)^{1/2}\left(\sum_{n\not=m}\frac{1}{|n-m|^2}\right)^{1/2}\le \frac{\pi}{\sqrt{3}}\|\partial_\phi f\|_1,
\end{align*}
implying that 
\[
\beta_m(f)=\sum_{n\in\mathbb{Z}}\|\widehat{f}_{n,m}\|_\infty=\|\widehat{f}_{m,m}\|_\infty+\sum_{n\not=m}\|\widehat{f}_{n,m}\|_\infty\le\|f\|_1+ \frac{\pi}{\sqrt{3}}\|\partial_\phi f\|_1.
\]
\end{proof}

\begin{corollary}
{\it Every rapidly decreasing function on $SE(2)$ is admissible.  In particular,  every compactly supported smooth function on $SE(2)$ is admissible.}
\end{corollary}

We finish this section by discussing integrability of non-Abelian matrix elements. 

\begin{proposition}\label{fnmL2}
{\it Suppose $f\in L^1\cap L^2(SE(2))$ and $n,m\in\mathbb{Z}$. Then $\widehat{f}_{n,m}\in L^2(\mathbb{R}^*_+,r\dd r)$. 
}\end{proposition}
\begin{proof}
Since $f\in L^1\cap L^2(SE(2))$, we then have $\widehat{f}\in\mathcal{H}^2(0,\infty)$. Thus, we get 
\begin{align*}
|\widehat{f}(p)_{n,m}|^2\le \sum_{k\in\mathbb{Z}}\sum_{l\in\mathbb{Z}}|\widehat{f}(p)_{k,l}|^2=\|\widehat{f}(p)\|_2^2.
\end{align*}
So,  we obtain 
\[
\int_0^\infty|\widehat{f}(p)_{n,m}|^2p\dd p\le\int_0^\infty\|\widehat{f}(p)\|_2^2p\dd p=\|\widehat{f}\|^2_{\mathcal{H}^2(0,\infty)}<\infty.
\] 
\end{proof}
\begin{corollary}\label{sumfnm}
{\it Let $f\in L^1\cap L^2(SE(2))$ and $m\in\mathbb{Z}$. Then
\[
\sum_{n\in\mathbb{Z}}\|\widehat{f}_{n,m}\|_2^2\le\|f\|_2^2.
\]
}\end{corollary}
\begin{proof}
Since $f\in L^1\cap L^2(SE(2))$,  we have 
\begin{align*}
\sum_{n\in\mathbb{Z}}\|\widehat{f}_{n,m}\|^2_2
&=\sum_{n\in\mathbb{Z}}\int_0^\infty|\widehat{f}(p)_{n,m}|^2p\dd p=\int_0^\infty\left(\sum_{n\in\mathbb{Z}}|\widehat{f}(p)_{n,m}|^2\right)p\dd p
\\&\le\int_0^\infty\left(\sum_{k\in\mathbb{Z}}\sum_{n\in\mathbb{Z}}|\widehat{f}(p)_{n,k}|_2^2\right)p\dd p=\int_0^\infty\|\widehat{f}(p)\|_2^2p\dd p=\|f\|_{2}^2.
\end{align*}
\end{proof}
\begin{corollary}\label{fnmL1}
{\it Let $f\in L^1\cap L^2(SE(2))$ with $\widehat{f}\in\mathcal{H}^1(0,\infty)$ and $n,m\in\mathbb{Z}$. Then $\widehat{f}_{n,m}\in L^1\cap L^2(\mathbb{R}^*_+,r\dd r)$. 
}\end{corollary}
\begin{proof}
Since $f\in L^1\cap L^2(SE(2))$,  by Proposition \ref{fnmL2},  we get $\widehat{f}_{n,m}\in L^2(\mathbb{R}^*_+,r\dd r)$.  In addition, we have 
\begin{align*}
|\widehat{f}(p)_{n,m}|\le\left(\sum_{k\in\mathbb{Z}}\sum_{l\in\mathbb{Z}}|\widehat{f}(p)_{k,l}|^2\right)^{1/2}=\|\widehat{f}(p)\|_2\le\|\widehat{f}(p)\|_1,
\end{align*}
implying that 
\[
\int_0^\infty|\widehat{f}(p)_{n,m}|p\dd p\le\int_0^\infty\|\widehat{f}(p)\|_1p\dd p=\|\widehat{f}\|_{\mathcal{H}^1(0,\infty)}<\infty.
\] 
\end{proof}

\section{\bf{Non-Abelian Fourier Series on $\mathbb{Z}^2\backslash SE(2)$}}
\label{sec:5}

Through this section,  we study analytical aspects of non-Abelian Fourier series associated to the concrete basis of trigonometric polynomials introduced in Section \ref{TrigB}.  As the main result of the paper,  we present a constructive closed form for coefficients of the form $\langle\widetilde{f},\psi_{\mathbf{k}}\rangle$ in terms of non-Abelian Fourier matrix elements of $f$. 
 
Suppose that $\Omega:=[0,1)^2\times[0,2\pi)$ is the canonical fundamental domain for $\mathbb{Z}^2$ in $SE(2)$.  We first start by realization of Theorem \ref{MainMat}  in the case of trigonometric basis.  To this end,  we need to make some generalizations in the notions. 

For $(x,y)\in\mathbb{R}^2$,  assume that  $\rho(x,y)\ge 0$ and $0\le\Phi(x,y)<2\pi$ are given by
\begin{equation*}\label{polxy}
x=\rho(x,y)\cos\Phi(x,y),\hspace{2cm}y=\rho(x,y)\sin\Phi(x,y).
\end{equation*}
For $\ell:=(s,t,k)\in\mathbb{R}^2\times\mathbb{Z}$,  let $\varphi_{\ell}:\mathbb{Z}^2\backslash SE(2)\to\mathbb{C}$ be the function given by 
\begin{equation}\label{VPl}
\varphi_{\ell}(\mathbb{Z}^2 g(x,y,\theta)):=e^{2\pi\ii (sx+ty)}e^{\ii k\theta},\hspace{1cm}{\rm for}\ \ \mathbb{Z}^2 g(x,y,\theta)\in\mathbb{Z}^2\backslash SE(2).
\end{equation} 
Let $f\in L^1\cap L^2(SE(2))$ such that $\widehat{f}\in\mathcal{H}^1(0,\infty)$ and $\widetilde{f}\in L^2(\mathbb{Z}^2\backslash SE(2),\mu)$.  For every $k\in\mathbb{Z}$,  suppose that $Y_f^k:\mathbb{R}^2\to\mathbb{C}$ is the function defined by  
\[
Y_f^k(s,t):=\langle\widetilde{f},\varphi_{\ell}\rangle_{L^2(\mathbb{Z}^2\backslash SE(2))}=\int_{SE(2)}f(g)\overline{\varphi_\ell(\mathbb{Z}^2 g)}\dd g.
\]
Invoking Theorem \ref{MainMat},  we have 
\begin{equation}\label{Yfk.rec}
Y_f^k(s,t)=\sum_{\gamma\in\mathbb{Z}^2}\sum_{n=-\infty}^\infty\sum_{m=-\infty}^\infty\int_0^\infty \widehat{f}(p)_{n,m}Q_{\gamma\circ\Omega}^{\ell}(p)_{m,n}p\dd p,
\end{equation}
where 
\[
Q^\ell_{\gamma\circ\Omega}(p)_{m,n}:=\int_{\gamma\circ\Omega}\overline{\varphi_\ell(\mathbb{Z}^2 g)}\mathrm{u}_{m,n}(g;p)\dd g,
\hspace{1cm}{\rm for}\ p>0, \ {\rm and}\ \gamma\in\mathbb{Z}^2.
\]
We here compute a closed form for the matrix elements $Q^\ell_{\gamma\circ\Omega}(p)_{m,n}$.
\begin{proposition}\label{Qkpmn}
{\it Assume $\ell=(s,t,k)\in\mathbb{R}^2\times\mathbb{Z}$. Let $p>0$, $n,m\in\mathbb{Z}$, and $\gamma=(\gamma_1,\gamma_2)\in\mathbb{Z}^2$.
Then
\begin{align*}
Q_{\gamma\circ\Omega}^{\ell}(p)_{m,n}=\frac{\ii^{n+k}\delta_{k,-m}}{2\pi}\int_{\gamma_1}^{\gamma_1+1}\int_{\gamma_2}^{\gamma_2+1}e^{-\ii(n+k)\Phi(x,y)}J_{n+k}(p\rho(x,y))e^{-2\pi \ii(sx+ty)}\dd x\dd y.
\end{align*}
}\end{proposition}
\begin{proof}
We have $\gamma\circ\Omega=[\gamma_1,\gamma_1+1)\times[\gamma_2,\gamma_2+1)\times[0,2\pi)$.  So using (\ref{upmat.mn}), we get 
\begin{align*}
Q_{\gamma\circ\Omega}^{\ell}(p)_{m,n}
&=\int_{\gamma\circ\Omega}\mathrm{u}_{m,n}(g;p)\overline{\psi_\ell(\mathbb{Z}^2 g)}\dd g
\\&=\frac{1}{4\pi^2}\int_{\gamma_1}^{\gamma_1+1}\int_{\gamma_2}^{\gamma_2+1}\int_0^{2\pi}\mathrm{u}_{m,n}(x,y,\theta;p)e^{-2\pi \ii(sx+ty)}e^{-\ii k\theta}\dd x\dd y\dd\theta
\\&=\frac{\ii^{n-m}}{4\pi^2}\hspace{-0.15cm}\int_{\gamma_1}^{\gamma_1+1}\hspace{-0.15cm}\int_{\gamma_2}^{\gamma_2+1}\hspace{-0.15cm}\int_0^{2\pi}e^{-\ii(m\theta+(n-m)\Phi(x,y))}J_{n-m}(p\rho(x,y))e^{-2\pi \ii(s x+ty)}e^{-\ii k\theta}\dd x\dd y\dd\theta
\\&=\frac{\ii^{n-m}\delta_{k,-m}}{2\pi}\int_{\gamma_1}^{\gamma_1+1}\int_{\gamma_2}^{\gamma_2+1}e^{-\ii(n-m)\Phi(x,y)}J_{n-m}(p\rho(x,y))e^{-2\pi \ii(sx+ty)}\dd x\dd y
\\&=\frac{\ii^{n+k}\delta_{k,-m}}{2\pi}\int_{\gamma_1}^{\gamma_1+1}\int_{\gamma_2}^{\gamma_2+1}e^{-\ii(n+k)\Phi(x,y)}J_{n+k}(p\rho(x,y))e^{-2\pi \ii(sx+ty)}\dd x\dd y.
\end{align*}
\end{proof}

\begin{proposition}\label{D}
{\it Let $f\in L^1\cap L^2(SE(2))$ with $\widetilde{f}\in L^2(\mathbb{Z}^2\backslash SE(2))$ and $\widehat{f}\in\mathcal{H}^1(0,\infty)$.  Assume $\ell=(s,t,k)\in\mathbb{R}^2\times\mathbb{Z}$,  $n,m\in\mathbb{Z}$ and $\gamma=(\gamma_1,\gamma_2)\in\mathbb{Z}^2$. Then 
\[
\int_0^\infty \widehat{f}(p)_{n,m}Q_{\gamma\circ\Omega}^{\ell}(p)_{m,n}p\dd p=\frac{\ii^{n+k}\delta_{k,-m}}{2\pi}\hspace{-0.1cm}\int_{\gamma_1}^{\gamma_1+1}\int_{\gamma_2}^{\gamma_2+1}\hspace{-0.1cm}D_{n,k}(f)(x,y)e^{-2\pi \ii(sx+ty)}\dd x\dd y,
\]
where 
\[
D_{n,k}(f)(x,y):=e^{-\ii(n+k)\Phi(x,y)}H_{n+k}^\infty(\widehat{f}_{n,-k})(\rho(x,y)).
\]
}\end{proposition}
\begin{proof}
Invoking Corollary \ref{fnmL1}, we have $\widehat{f}_{n,m}\in L^1\cap L^2(\mathbb{R}^*_+,r\dd r)$.  So, we get 
\begin{align*}
&\int_{\gamma_1}^{\gamma_1+1}\int_{\gamma_2}^{\gamma_2+1}\int_0^\infty|\widehat{f}(p)_{n,m}||J_{n+k}(p\rho(x,y))|p\dd p\dd x\dd y\\&\le\int_{\gamma_1}^{\gamma_1+1}\int_{\gamma_2}^{\gamma_2+1}\int_0^\infty|\widehat{f}(p)_{n,m}|p\dd p\dd x\dd y\le\|\widehat{f}_{n,m}\|_{L^1(\mathbb{R}^*_+,r\dd r)}.
\end{align*}
Then,  using Fubini's Theorem,  we obtain  
\begin{align*}
&\int_0^\infty \widehat{f}(p)_{n,m}Q_{\gamma\circ\Omega}^{\ell}(p)_{m,n}p\dd p
\\&=\frac{\ii^{n+k}\delta_{k,-m}}{2\pi}\hspace{-0.15cm}\int_0^\infty\hspace{-0.15cm}\widehat{f}(p)_{n,m}\hspace{-0.1cm}\left(\int_{\gamma_1}^{\gamma_1+1}\hspace{-0.25cm}\int_{\gamma_2}^{\gamma_2+1}\hspace{-0.32cm}e^{-\ii(n+k)\Phi(x,y)}J_{n+k}(p\rho(x,y))e^{-2\pi \ii(sx+ty)}\dd x\dd y\hspace{-0.1cm}\right)\hspace{-0.1cm}p\dd p
\\&=\frac{\ii^{n+k}\delta_{k,-m}}{2\pi}\int_{\gamma_1}^{\gamma_1+1}\int_{\gamma_2}^{\gamma_2+1}e^{-\ii(n+k)\Phi(x,y)}H_{n+k}^\infty(\widehat{f}_{n,m})(\rho(x,y))e^{-2\pi \ii(sx+ty)}\dd x\dd y
\\&=\frac{\ii^{n+k}\delta_{k,-m}}{2\pi}\int_{\gamma_1}^{\gamma_1+1}\int_{\gamma_2}^{\gamma_2+1}e^{-\ii(n+k)\Phi(x,y)}H_{n+k}^\infty(\widehat{f}_{n,-k})(\rho(x,y))e^{-2\pi \ii(sx+ty)}\dd x\dd y.
\end{align*}
\end{proof}
\begin{proposition}\label{S}
{\it Let $f\in L^1\cap L^2(SE(2))$ with $\widetilde{f}\in L^2(\mathbb{Z}^2\backslash SE(2))$ and $\widehat{f}\in\mathcal{H}^1(0,\infty)$.  Assume $\ell=(s,t,k)\in\mathbb{R}^2\times\mathbb{Z}$ and $\gamma=(\gamma_1,\gamma_2)\in\mathbb{Z}^2$.  Then 
\[
\sum_{n=-\infty}^\infty\sum_{m=-\infty}^\infty\int_0^\infty \widehat{f}(p)_{n,m}Q_{\gamma\circ\Omega}^{\ell}(p)_{m,n}p\dd p=\int_{\gamma_1}^{\gamma_1+1}\int_{\gamma_2}^{\gamma_2+1}\hspace{-0.15cm}S_k(f)(x,y)e^{-2\pi \ii(sx+ty)}\dd x\dd y,
\]
with 
\[
S_k(f)(x,y):=\frac{1}{2\pi}\sum_{n=-\infty}^\infty\ii^{n}e^{-\ii n\Phi(x,y)}H_{n}^\infty(\widehat{f}_{n-k,-k})(\rho(x,y)),
\]
where the series converges absolutely for every $(x,y)\in\mathbb{R}^2$. 
}\end{proposition}
\begin{proof}
By Proposition \ref{D},  we get
\begin{align*}
&\sum_{n=-\infty}^\infty\sum_{m=-\infty}^\infty\int_0^\infty \widehat{f}(p)_{n,m}Q_{\gamma\circ\Omega}^{\ell}(p)_{m,n}p\dd p
\\&=\frac{1}{2\pi}\sum_{n=-\infty}^\infty\ii^{n+k}\int_{\gamma_1}^{\gamma_1+1}\int_{\gamma_2}^{\gamma_2+1}e^{-\ii(n+k)\Phi(x,y)}H_{n+k}^\infty(\widehat{f}_{n,-k})(\rho(x,y))e^{-2\pi \ii(sx+ty)}\dd x\dd y
\\&=\frac{1}{2\pi}\sum_{n=-\infty}^\infty\ii^{n}\int_{\gamma_1}^{\gamma_1+1}\int_{\gamma_2}^{\gamma_2+1}e^{-\ii n\Phi(x,y)}H_{n}^\infty(\widehat{f}_{n-k,-k})(\rho(x,y))e^{-2\pi \ii(sx+ty)}\dd x\dd y.
\end{align*} 
Applying H\"older's inequality and Lemma \ref{J2}, we obtain 
\begin{align*}
\sum_{n=-\infty}^\infty\hspace{-0.1cm}|\widehat{f}(p)_{n-k,-k}||J_{n}(p\rho(x,y))|&\le\left(\sum_{n=-\infty}^\infty\hspace{-0.1cm}|\widehat{f}(p)_{n-k,-k}|^2\right)^{1/2}\hspace{-0.2cm}\left(\sum_{n=-\infty}^\infty\hspace{-0.1cm}|J_{n}(p\rho(x,y))|^2\right)^{1/2}
\\&=\left(\sum_{n=-\infty}^\infty|\widehat{f}(p)_{n,-k}|^2\right)^{1/2}\le\|\widehat{f}(p)\|_2<\infty,
\end{align*}
implying that 
\begin{equation}\label{hat.J.2}
\sum_{n=-\infty}^\infty|\widehat{f}(p)_{n-k,-k}||J_{n}(p\rho(x,y))|\le \|\widehat{f}(p)\|_2,
\end{equation}
for a.e. $p\in(0,\infty)$ and every $(x,y)\in [\gamma_1,\gamma_1+1]\times[\gamma_2,\gamma_2+1]$.  Hence,  we achieve 
\[
\int_0^\infty\sum_{n=-\infty}^\infty|\widehat{f}(p)_{n-k,-k}||J_{n}(p\rho(x,y))|p\dd p\le\int_0^\infty\|\widehat{f}(p)\|_2p\dd p\le\int_0^\infty\|\widehat{f}(p)\|_1p\dd p<\infty,
\] 
implying that 
\begin{equation}\label{S.Hn}
\sum_{n=-\infty}^\infty|H_{n}^\infty(\widehat{f}_{n-k,-k})(\rho(x,y))|\le\|\widehat{f}\|_{\mathcal{H}^1(0,\infty)}.
\end{equation}
for every $(x,y)\in [\gamma_1,\gamma_1+1]\times[\gamma_2,\gamma_2+1]$.  
Therefore,  Fubini's Theorem implies that  
\begin{align*}
&\sum_{n=-\infty}^\infty\sum_{m=-\infty}^\infty\int_0^\infty \widehat{f}(p)_{n,m}Q_{\gamma\Omega}^{\ell}(p)_{m,n}p\dd p
\\&=\frac{1}{2\pi}\sum_{n=-\infty}^\infty\ii^{n}\int_{\gamma_1}^{\gamma_1+1}\int_{\gamma_2}^{\gamma_2+1}e^{-\ii n\Phi(x,y)}H_{n}^\infty(\widehat{f}_{n-k,-k})(\rho(x,y))e^{-2\pi \ii(sx+ty)}\dd x\dd y
\\&=\frac{1}{2\pi}\int_{\gamma_1}^{\gamma_1+1}\int_{\gamma_2}^{\gamma_2+1}\left(\sum_{n=-\infty}^\infty\ii^{n}e^{-\ii n\Phi(x,y)}H_{n}^\infty(\widehat{f}_{n-k,-k})(\rho(x,y))\right)e^{-2\pi \ii(sx+ty)}\dd x\dd y.
\end{align*} 
\end{proof}

Next,  one can characterize $Y_f^k(r,\theta)$ as follow.

\begin{theorem}\label{Ykf.alt}
Suppose $f\in L^1\cap L^2(SE(2))$ such that $\widetilde{f}\in L^2(\mathbb{Z}^2\backslash SE(2))$ and $\widehat{f}\in\mathcal{H}^1(0,\infty)$. 
Assume $(r,\theta)\in\mathbb{R}^2$ and $k\in\mathbb{Z}$.  Then 
\[
Y_f^k(r,\theta)=\int_{0}^\infty\left(\sum_{n=-\infty}^\infty e^{-\ii n\theta}J_n(2\pi rp)H_{n}^\infty(\widehat{f}_{n-k,-k})(p)\right)p\dd p.
\]
\end{theorem}
\begin{proof}
Let $\ell:=(s,t,k)=:(r\cos\theta,r\sin\theta,k)$.  Then,  (\ref{Yfk.rec}) and Proposition \ref{S}, implies   
\begin{align*}
Y_f^k(r,\theta)
&=\sum_{\gamma_1\in\mathbb{Z}}\sum_{\gamma_2\in\mathbb{Z}}
\left(\sum_{n=-\infty}^\infty\sum_{m=-\infty}^\infty\int_0^\infty \widehat{f}(p)_{n,m}Q_{\gamma\circ\Omega}^{\ell}(p)_{m,n}p\dd p\right)
\\&=\sum_{\gamma_1\in\mathbb{Z}}\sum_{\gamma_2\in\mathbb{Z}}\int_{\gamma_1}^{\gamma_1+1}\hspace{-0.22cm}\int_{\gamma_2}^{\gamma_2+1}S_k(f)(x,y)e^{-2\pi \ii(sx+ty)}\dd x\dd y
\\&=\frac{1}{2\pi}\int_{-\infty}^\infty\int_{-\infty}^\infty\left(\sum_{n=-\infty}^\infty\ii^{n}e^{-\ii n\Phi(x,y)}H_{n}^\infty(\widehat{f}_{n-k,-k})(\rho(x,y))\right)e^{-2\pi \ii(sx+ty)}\dd x\dd y
\\&=\frac{1}{2\pi}\int_{0}^\infty\left(\int_{0}^{2\pi}\left(\sum_{n=-\infty}^\infty\ii^{n}e^{-\ii n\phi}H_{n}^\infty(\widehat{f}_{n-k,-k})(p)\right)e^{-2\pi\ii p(s\cos\phi+t\sin\phi)}\dd\phi\right)p\dd p.
\end{align*}
Using the integral representation of Bessel functions, formula (\ref{B.i.R}), we have  
\begin{align*}
\frac{1}{2\pi}\int_0^{2\pi}e^{-\ii n\phi}e^{-2\pi \ii p(s\cos\phi+t\sin\phi)}\dd\phi
&=\frac{1}{2\pi}\int_0^{2\pi}e^{-\ii n\phi}e^{-2\pi \ii pr \cos(\phi-\theta)}\dd\phi
\\&=\frac{e^{-\ii n\theta}}{2\pi}\int_0^{2\pi}e^{-\ii n\phi}e^{-2\pi \ii pr \cos\phi}\dd\phi
\\&=\ii ^n e^{-\ii n\theta}J_{-n}(2\pi rp)
= \ii ^{-n}e^{-\ii n\theta}J_{n}(2\pi rp).
\end{align*}
Therefore,  using inequality (\ref{S.Hn}) and Fubini's Theorem,  we get 
\begin{align*}
\int_{0}^{2\pi}\left(\sum_{n=-\infty}^\infty\ii^{n}e^{-\ii n\phi}H_{n}^\infty(\widehat{f}_{n-k,-k})(p)\right)e^{-2\pi r\ii(s\cos\phi+t\sin\phi)}\dd\phi
=\sum_{n=-\infty}^\infty e^{-\ii n\theta}J_n(2\pi rp)H_{n}^\infty(\widehat{f}_{n-k,-k})(p),
\end{align*}
for a.e.  $p\in(0,\infty)$.  Hence, we achieve 
\begin{align*}
Y_f^k(r,\theta)
&=\frac{1}{2\pi}\hspace{-0.1cm}\int_{0}^\infty\hspace{-0.1cm}\left(\int_{0}^{2\pi}\hspace{-0.1cm}\left(\sum_{n=-\infty}^\infty\ii^{n}e^{-\ii n\phi}H_{n}^\infty(\widehat{f}_{n-k,-k})(p)\right)e^{-2\pi\ii p(s\cos\phi+t\sin\phi)}\dd\phi\right)p\dd p
\\&=\int_{0}^\infty\left(\sum_{n=-\infty}^\infty e^{-\ii n\theta}J_n(2\pi rp)H_{n}^\infty(\widehat{f}_{n-k,-k})(p)\right)p\dd p.
\end{align*}
\end{proof}
We here continue this section by introducing a unified closed form for coefficients of functions of the form $\widetilde{f}\in L^2(\mathbb{Z}^2\backslash SE(2),\mu)$ in terms of non-Abelian Fourier matrix elements of $f$ on $SE(2)$,  according to trigonometric basis.
To this end,  we need to prove the following technical lemmas for further investigation of $Y_f^k(r,\theta)$.  

For functions $u:[0,2\pi)\to\mathbb{C}$ and $v:[0,\infty)\to\mathbb{C}$, let $u\otimes v:\mathbb{R}^2\to\mathbb{C}$ be given by  
\[
u\otimes v(r,\theta):=u(\theta)v(r).
\]
\begin{lemma}\label{VnL2}
Let $(v_n)_{n\in\mathbb{Z}}\subset L^2(\mathbb{R}_+,r\dd r)$ such that $\sum_{n\in\mathbb{Z}}\|v_n\|_{2}^2<\infty$.  Assume that $V_n:=\mathbf{e}_{-n}\otimes v_n$ for every $n\in\mathbb{Z}$. Then $V=\sum_{n\in\mathbb{Z}}V_n$ unconditionally converges in $L^2(\mathbb{R}^2,(2\pi)^{-1}\dd\mathbf{x})$ and 
\[
\left\|\sum_{n\in\mathbb{Z}}V_n\right\|_{2}^2=\sum_{n\in\mathbb{Z}}\|v_n\|^2_{2}.
\]
In particular, if $\sum_{n}|v_n(r)|<\infty$ for a.e.  $r\in [0,\infty)$ then 
\[
V(r,\theta)=\sum_{n\in\mathbb{Z}}V_n(r,\theta)\hspace{1cm}\ {\rm for\ \ a.e. }\  (r,\theta)\in\mathbb{R}^2. 
\]
\end{lemma}
\begin{proof}
Let $n,m\in\mathbb{Z}$ be given.  Then 
\begin{align*}
\langle V_n,V_m\rangle
&=\frac{1}{2\pi}\int_0^{2\pi}\int_0^\infty V_n(r,\theta)\overline{V_m(r,\theta)}r\dd r\dd\theta
=\delta_{n,m}\langle v_n,v_m\rangle
=\delta_{n,m}\|v_n\|_{2}^2.
\end{align*}
Therefore, for every finite subset $\mathbb{J}\subset\mathbb{Z}$, we obtain 
\begin{equation}\label{mainJ}
\vspace{-0.1cm}
\left\|\sum_{n\in\mathbb{J}}V_n\right\|_{2}^2=\sum_{n\in\mathbb{J}}\|v_n\|^2_{2}.
\end{equation}
Let $\imath:\mathbb{N}\to\mathbb{Z}$ be an injection and 
$\sigma:\mathbb{Z}\to\mathbb{Z}$ be a bijection.  
For $N\in\mathbb{N}$, let $W_N:=\sum_{k=1}^NV_{\sigma(\imath_k)}$.
Then, using (\ref{mainJ}), we get 
\begin{align*}
\|W_N-W_M\|_2^2=\left\|\sum_{N+1\le k\le M}V_{\sigma(\imath_k)}\right\|_{2}^2=\sum_{N+1\le k\le M}\|v_{\sigma(\imath_k)}\|^2_{2},
\end{align*}
if $M,N\in\mathbb{N}$ and $M>N$.  Therefore,  $(W_N)$ is Cauchy in $L^2(\mathbb{R}^2)$.  Since $L^2(\mathbb{R}^2)$ is a Banach space,  we conclude that $(W_N)$ is convergent in $L^2(\mathbb{R}^2)$ and hence the series $\sum_{k=1}^\infty V_{\sigma(\imath_k)}$ converges in $L^2(\mathbb{R}^2)$.  Since $\sigma$ and $\imath$ were arbitrary, we conclude that the series $\sum_{n\in\mathbb{Z}}V_n$ unconditionally converges in $L^2(\mathbb{R}^2)$.

Suppose $\sum_{n\in\mathbb{Z}}|v_n(r)|<\infty$ for a.e.  $r\in [0,\infty)$.  Let $W(r,\theta):=\sum_{n\in\mathbb{Z}}V_n(r,\theta)$.  Assume that $\imath:\mathbb{N}\to\mathbb{Z}$ is an injection and $\sigma:\mathbb{Z}\to\mathbb{Z}$ is a bijection.  Since $\sum_{n\in\mathbb{Z}}V_n(r,\theta)$ is absolutely convergent for a.e.  $(r,\theta)$,  we achieve that $W(r,\theta)<\infty$ and 
\[
W(r,\theta)=\sum_{k=1}^\infty V_{\sigma(\imath_k)}(r,\theta),\hspace{1cm}\ {\rm for\ \ a.e. }\  (r,\theta)\in\mathbb{R}^2. 
\vspace{-0.1cm}
\]
Let $V:=\sum_{n\in\mathbb{Z}}V_n$ in $L^2(\mathbb{R}^2)$. 
Then there exists a strictly increasing function $\tau:\mathbb{N}\to\mathbb{N}$ such that 
\[
V(r,\theta)=\lim_{N\to\infty}\sum_{k=1}^{\tau(N)}V_{\sigma(\imath_k)}(r,\theta),\hspace{1cm}\ {\rm for\ \ a.e. }\  (r,\theta)\in\mathbb{R}^2. 
\vspace{-0.1cm}
\]
So,  we get $W(r,\theta)=V(r,\theta)$ for a.e.  $(r,\theta)\in\mathbb{R}^2$.
\end{proof}

\begin{proposition}\label{VnL2U}
{\it Let $(v_n)_{n\in\mathbb{Z}}\subset L^2(\mathbb{R}_+,r\dd r)$ be a sequence of bounded continuous functions such that $\sum_{n\in\mathbb{Z}}\|v_n\|_{2}^2<\infty$ and $\sum_{n\in\mathbb{Z}}\|v_n\|_\infty<\infty$. Suppose $V_n:=\mathbf{e}_{-n}\otimes v_n$ for every $n\in\mathbb{Z}$. Then $V(r,\theta):=\sum_{n\in\mathbb{Z}}V_n(r,\theta)$ is a continuous function in $L^2(\mathbb{R}^2)$. 
}\end{proposition}
\begin{proof}
Since $\sum_n\|v_n\|_\infty<\infty$,  we get $\sum_n|v_n(r)|<\infty$ and hence $\sum_{n}e^{-\ii n\theta}v_n(r)$ converges absolutely and uniformly on $\mathbb{R}^2$. In particular,  $\sum_{n}e^{-\ii n\theta}v_n(r)<\infty$,  for $(r,\theta)\in\mathbb{R}^2$.  Let $V(r,\theta):=\sum_{n}e^{-\ii n\theta}v_n(r)$. 
Using Lemma \ref{VnL2}, we get $V\in L^2(\mathbb{R}^2)$.  
Then the Weierstrass M-test, implies that the series $\sum_ne^{-\ii n\theta}v_n(r)$ converges absolutely and uniformly on $\mathbb{R}^2$.    Since each $V_n$ is continuous,  we get that $V$ is continuous.
\end{proof}

\begin{lemma}
{\it Let $(\nu_n)_{n\in\mathbb{Z}}\subset L^2(\mathbb{R}_+,r\dd r)$ such that $\sum_{n\in\mathbb{Z}}\|\nu_n\|_2^2<\infty$.  Then 
\[
\lim_{N\to\infty}\sum_{n\in\mathbb{Z}}\mathbf{e}_{-n}\otimes H_n^N(\nu_n)=\sum_{n\in\mathbb{Z}}\mathbf{e}_{-n}\otimes H_n^\infty(\nu_n),
\]
where the limit and the series converges in $L^2(\mathbb{R}^2)$.
}\end{lemma}
\begin{proof}
For every $N\in\mathbb{N}$ and $n\in\mathbb{Z}$,  suppose that 
$\nu_n^N:=\chi_{(0,N]}\nu_n$,  $v_n^N:=\nu_n-\nu_n^N$,  $h_n^N:=H_n^\infty(v_n^N)$, and $V_n^N:=\mathbf{e}_{-n}\otimes h_n^N$.  Then $\nu_n^N,v_n^N,h_n^N\in L^2(\mathbb{R}_+,r\dd r)$ with $\|h_n^N\|_2=\|v_n^N\|_2\le 2\|\nu_n\|_2$. In addition,  we have 
\[
\sum_{n\in\mathbb{Z}}\left\|h_n^N\right\|_2^2=\sum_{n\in\mathbb{Z}}\left\|v_n^N\right\|_2^2\le 2\sum_{n\in\mathbb{Z}}\|\nu_n\|_2^2<\infty.
\vspace{-0.1cm}
\] 
Let $F_N:=\sum_n\mathbf{e}_{-n}\otimes H_n^N(\nu_n)$ and $F:=\sum_n\mathbf{e}_{-n}\otimes H_n^\infty(\nu_n)$.
Then $F_N,F\in L^2(\mathbb{R}^2)$.
Applying Lemma \ref{VnL2}, for $(v_n^N)$, we get 
\begin{align*}
\|F-F_N\|^2_{2}
&=\left\|\sum_n\mathbf{e}_{-n}\otimes H_n^\infty(\nu_n)-\sum_n\mathbf{e}_{-n}\otimes H_n^N(\nu_n)\right\|^2_2
\\&=\left\|\sum_n\mathbf{e}_{-n}\otimes H_n^\infty(\nu_n)-\sum_n\mathbf{e}_{-n}\otimes H_n^\infty(\nu_n^N)\right\|^2_2
\hspace{-0.15cm}=\left\|\sum_n V_n^N\right\|^2_2\hspace{-0.15cm}=\sum_n\|v_n^N\|_2^2.
\end{align*}
So,  Lebesgue Dominated Theorem implies $\lim_N\|v_n^N\|_2^2=0$ for $n\in\mathbb{Z}$.  Then  
\[
\lim_N\|F_N-F\|^2_{2}=\lim_N\sum_n\|v_n^N\|_2^2=\sum_n\lim_N\|v_n^N\|_2^2=0.
\]
\end{proof}

\begin{corollary}\label{vnMain}
{\it Let $(v_n)_{n\in\mathbb{Z}}\subset L^2(\mathbb{R}_+,r\dd r)$ such that 
$\sum_{n\in\mathbb{Z}}\|v_n\|_2^2<\infty$.  Then 
\[
\lim_{N\to\infty}\sum_n\mathbf{e}_{-n}\otimes H_n^N(H_n^\infty(v_n))=\sum_n\mathbf{e}_{-n}\otimes v_n,
\]
where the limit,  convergence of the series and equality are in the sense of  $L^2(\mathbb{R}^2)$.
}\end{corollary}

\begin{lemma}\label{FN}
{\it Let $(\nu_n)_{n\in\mathbb{Z}}\subset L^2(\mathbb{R}_+,r\dd r)$ such that $\sum_{n\in\mathbb{Z}}\|\nu_n\|_2^2<\infty$ and $N\in\mathbb{N}$.  Suppose $F_N=\sum_{n\in\mathbb{Z}}\mathbf{e}_{-n}\otimes H_n^N(\nu_n)$.  Then  
$F_N(r,\theta)=\sum_{n\in\mathbb{Z}}e^{-\ii n\theta}H_n^N(\nu_n)(r)$ a.e.  on $\mathbb{R}^2$. }
\end{lemma}
\begin{proof}
Suppose $N\in\mathbb{N}$,  $F_N:=\sum_n\mathbf{e}_{-n}\otimes H_n^N(\nu_n)$, and $r\in\mathbb{R}_+$.  Then 
\begin{align*}
|H_n^N(\nu_n)(r)|&\le\int_0^N|\nu_n(p)||J_n(pr)|p\dd p.
\end{align*}
Using Lemma \ref{J2}, we get 
\begin{align*}
\sum_{n\in\mathbb{Z}}|H_n^N(\nu_n)(r)|
&\le\sum_{n\in\mathbb{Z}}\int_0^N|\nu_n(p)||J_n(pr)|p\dd p=\int_0^N\left(\sum_{n\in\mathbb{Z}}|\nu_n(p)||J_n(pr)|\right)p\dd p
\\&\le\int_0^N\left(\sum_{n\in\mathbb{Z}}|\nu_n(p)|^2\right)^{1/2}\left(\sum_{n\in\mathbb{Z}}|J_n(pr)|^2\right)^{1/2}p\dd p
=\int_0^N\left(\sum_{n\in\mathbb{Z}}|\nu_n(p)|^2\right)^{1/2}p\dd p
\\&\le\frac{N}{\sqrt{2}}\left(\int_0^N\sum_{n\in\mathbb{Z}}|\nu_n(p)|^2p\dd p\right)^{1/2}=\frac{N}{\sqrt{2}}\left(\sum_{n\in\mathbb{Z}}\int_0^N|\nu_n(p)|^2p\dd p\right)^{1/2}\le\frac{N}{\sqrt{2}}\left(\sum_{n\in\mathbb{Z}}\|\nu_n\|_2^2\right)^{1/2},
\end{align*}
implying that 
$F_N(r,\theta)=\sum_{n\in\mathbb{Z}}e^{-\ii n\theta}H_n^N(\nu_n)(r)$ a.e.  on $\mathbb{R}^2$. 
\end{proof}

\begin{lemma}\label{HnM}
Let $(u_n)_{n\in\mathbb{Z}}\subset L^2(\mathbb{R}_+,r\dd r)$ such that $\sum_{n\in\mathbb{Z}}\|u_n\|_2^2<\infty$.  Suppose $(r,\theta)\in\mathbb{R}^2$ and $M>0$.  Then
\[
\int_{0}^{M}\left(\sum_{n=-\infty}^\infty e^{-\ii n\theta}J_n(2\pi rp)u_n(p)\right)p\dd p=\sum_{n=-\infty}^\infty e^{-\ii n\theta}H_n^{M}(u_n)(2\pi r).
\]
\end{lemma}
\begin{proof}
For every $p>0$, we have 
\begin{align*}
\sum_{n=-\infty}^\infty\hspace{-0.1cm}|J_n(2\pi rp)||u_n(p)|
&\le\hspace{-0.1cm}\left(\sum_{n=-\infty}^\infty\hspace{-0.15cm}|J_n(2\pi rp)|^2\hspace{-0.05cm}\right)^{1/2}\hspace{-0.2cm}\left(\sum_{n=-\infty}^\infty\hspace{-0.15cm}|u_n(p)|^2\hspace{-0.05cm}\right)^{1/2}
\hspace{-0.4cm}=\left(\sum_{n=-\infty}^\infty\hspace{-0.15cm}|u_n(p)|^2\hspace{-0.05cm}\right)^{1/2}\hspace{-0.4cm}.
\end{align*}
Therefore,  we obtain  
\begin{align*}
\int_{0}^{M}\hspace{-0.15cm}\left(\sum_{n=-\infty}^\infty\hspace{-0.1cm}|J_n(2\pi rp)||u_n(p)|\hspace{-0.05cm}\right)\hspace{-0.05cm}p\dd p&\le \int_0^M\left(\sum_{n=-\infty}^\infty|u_n(p)|^2\right)^{1/2}p\dd p
\\&\le \frac{M}{\sqrt{2}}\left(\int_0^M\sum_{n=-\infty}^\infty|u_n(p)|^2p\dd p\right)^{1/2}\\&\le\frac{M}{\sqrt{2}}\hspace{-0.05cm}\left(\hspace{-0.05cm}\int_0^\infty\hspace{-0.3cm}\sum_{n=-\infty}^\infty\hspace{-0.1cm}|u_n(p)|^2p\dd p\hspace{-0.1cm}\right)^{1/2}
\hspace{-0.45cm}=\frac{M}{\sqrt{2}}\hspace{-0.1cm}\left(\sum_{n=-\infty}^\infty\hspace{-0.2cm}\|u_n\|_2^2\hspace{-0.05cm}\right)^{1/2}\hspace{-0.3cm}.
\end{align*}
Then Fubini's Theorem implies that 
\begin{align*}
\int_{0}^{M}\left(\sum_{n=-\infty}^\infty e^{-\ii n\theta}J_n(2\pi rp)u_n(p)\right)p\dd p
&=\sum_{n=-\infty}^\infty e^{-\ii n\theta}\left(\int_{0}^{M}J_n(2\pi rp)u_n(p)p\dd p\right)
\\&=\sum_{n=-\infty}^\infty e^{-\ii n\theta}H_n^{M}(u_n)(2\pi r).
\end{align*}
\end{proof}

The following theorem presents an explicit point-wise formulation for the function $Y_f^k(r,\theta)$ in terms of non-Abelian Fourier matrix elements of $f$. 

\begin{theorem}\label{mainEXP}
Let $f\in L^1\cap L^2(SE(2))$ be an admissible function such that $\widetilde{f}\in L^2(\mathbb{Z}^2\backslash SE(2))$ and $\widehat{f}\in\mathcal{H}^1(0,\infty)$.  Suppose $(r,\theta)\in\mathbb{R}^2$  and $k\in\mathbb{Z}$ are given.   
\begin{enumerate}
\item If $r>0$ then 
\begin{equation}\label{Yfk}
Y_f^k(r,\theta)=e^{-\ii k\theta}\sum_{n\in\mathbb{Z}}e^{-\ii n\theta}\widehat{f}(2\pi r)_{n,-k},
\end{equation}
where the series converges absolutely and uniformly on $\mathbb{R}^2-\{\mathbf{0}\}$.
\item $Y_f^k(\mathbf{0})=\widehat{f}[k]$.
\end{enumerate}
\end{theorem}
\begin{proof}
For $n\in\mathbb{Z}$,  suppose $v_n(0):=\delta_{n,0}\widehat{f}[k]$ and  $v_n(p):=\widehat{f}(p)_{n-k,-k}$ if $p>0$.  Then using Propositions \ref{fnm.cts.bd} and \ref{fnmL2} we conclude that $v_n$ is a continuous function in $L^2(\mathbb{R}_+,r\dd r)$.  In addition,  according to Corollary \ref{sumfnm}, we get  
\[
\sum_{n\in\mathbb{Z}}\|v_n\|_2^2=\sum_{n\in\mathbb{Z}}\|\widehat{f}_{n-k,-k}\|_2^2=\sum_{n\in\mathbb{Z}}\|\widehat{f}_{n,-k}\|_2^2<\infty. 
\vspace{-0.1cm}
\]
Let $V(r,\theta):=\sum_{n=-\infty}^\infty e^{-\ii n\theta}v_n(r)$.  Then admissibility of $f$ implies that 
\[
\sum_{n\in\mathbb{Z}}\|v_n\|_\infty\le\sum_{n\in\mathbb{Z}}\|\widehat{f}_{n-k,-k}\|_\infty=\sum_{n\in\mathbb{Z}}\|\widehat{f}_{n,-k}\|_\infty=\beta_{-k}(f).
\]
So Proposition \ref{VnL2U} guarantees that $V$ is a continuous function in $L^2(\mathbb{R}^2)$.  
Assume that $u_n:=H_n^\infty(v_n)$, for every $n\in\mathbb{Z}$.  Then 
$\sum_{n\in\mathbb{Z}}\|u_n\|_2^2=\sum_{n\in\mathbb{Z}}\|v_n\|_2^2<\infty$. 
Let $F_N=\sum_{n\in\mathbb{Z}}\mathbf{e}_{-n}\otimes H_n^N(u_n)$.  Then  
Lemma \ref{FN} implies that $F_N(r,\theta)=\sum_{n\in\mathbb{Z}}e^{-\ii n\theta}H_n^N(u_n)(r)$ a.e.  on $\mathbb{R}^2$.  Then, using Corollary \ref{vnMain}, we have 
\[
\lim_NF_N=\sum_{n=-\infty}^\infty\mathbf{e}_{-n}\otimes H_n^\infty(u_n)=V,
\]
where the limit,  convergence of the series and equalities are in the sense of  $L^2(\mathbb{R}^2)$.  Since $\lim_N F_N=V$ in $L^2(\mathbb{R}^2)$, there exists a strictly increasing function $\tau:\mathbb{N}\to\mathbb{N}$ such that $V(r,\theta)=\lim_NF_{\tau(N)}(r,\theta)$ for a.e.  $(r,\theta)\in\mathbb{R}^2$. 
Now suppose that $N>0$.  Applying Lemma \ref{HnM}, for $M:=\tau(N)$, we get  
\[
\int_{0}^{\tau(N)}\left(\sum_{n=-\infty}^\infty e^{-\ii n\theta}J_n(2\pi rp)u_n(p)\right)p\dd p=\sum_{n=-\infty}^\infty e^{-\ii n\theta}H_n^{\tau(N)}(u_n)(2\pi r),
\vspace{-0.1cm}
\]
for every $(r,\theta)\in\mathbb{R}^2$. 
So,  using Theorem \ref{Ykf.alt},  we obtain  
\begin{align*}
Y_f^k(r,\theta)&=\int_{0}^\infty\left(\sum_{n=-\infty}^\infty e^{-\ii n\theta}J_n(2\pi rp)H_{n}^\infty(\widehat{f}_{n-k_3,-k_3})(p)\right)p\dd p
\\&=\lim_N\int_{0}^N\left(\sum_{n=-\infty}^\infty e^{-\ii n\theta}J_n(2\pi rp)u_n(p)\right)p\dd p
\\&=\lim_N\int_{0}^{\tau(N)}\left(\sum_{n=-\infty}^\infty e^{-\ii n\theta}J_n(2\pi rp)u_n(p)\right)p\dd p
\\&=\lim_N\sum_{n=-\infty}^\infty e^{-\ii n\theta}H_n^{\tau(N)}(u_n)(2\pi r )=\lim_N F_{\tau(N)}(2\pi r,\theta)=V(2\pi r,\theta),
\end{align*}
for a.e.  $(r,\theta)\in\mathbb{R}^2$.  
Since $V$ and $Y_f^k$ are continuous,  we get $Y_f^k(r,\theta)=V(2\pi r,\theta)$,  for every $(r,\theta)\in\mathbb{R}^2$.  Therefore,  if $r>0$ then  
\begin{align*}
Y_f^k(r,\theta)
&=\sum_{n\in\mathbb{Z}}e^{-\ii n\theta}v_n(r)=\sum_{n\in\mathbb{Z}}e^{-\ii n\theta}\widehat{f}(2\pi r)_{n-k,-k}
\\&=\sum_{n\in\mathbb{Z}}e^{-\ii(n+k)\theta}\widehat{f}(2\pi r)_{n,-k}=e^{-\ii k\theta}\sum_{n\in\mathbb{Z}}e^{-\ii n\theta}\widehat{f}(2\pi r)_{n,-k}.
\end{align*}
If $r=0$ then 
\[
Y_f^k(\mathbf{0})=\sum_{n\in\mathbb{Z}}e^{-\ii n\theta}v_n(0)=\sum_{n\in\mathbb{Z}}e^{-\ii n\theta}\delta_{n,0}\widehat{f}[k]=\widehat{f}[k].
\] 
\end{proof}

\begin{remark}
It is worth mentioning that statement of Theorem \ref{mainEXP}(2) can be proven directly without employing the analytic technique introduced in Theorem \ref{mainEXP}.  If $r=0$ and $\ell:=(0,0,k)$ then using Weil's formula (\ref{TH.m}) and (\ref{VPl}),  we have  
\begin{align*}
Y_f^k(\mathbf{0})&=\langle\widetilde{f},\varphi_{\ell}\rangle
=\int_{\mathbb{Z}^2\backslash SE(2)}\widetilde{f}(\mathbb{Z}^2 g)\overline{\varphi_{\ell}(\mathbb{Z}^2 g)}\dd\mu(\mathbb{Z}^2 g)
\\&=\int_{SE(2)}f(g)\overline{\varphi_{\ell}(\mathbb{Z}^2 g)}\dd g
=\frac{1}{4\pi^2}\int_{SE(2)}f(g(x,y,\theta))e^{-\ii k_3\theta}\dd x\dd y\dd\theta=\widehat{f}[k_3].
\end{align*} 
\end{remark}

This section is concluded by the following constructive closed form formulation of non-Abelian Fourier coefficients on $\mathbb{Z}^2\backslash SE(2)$ according to trigonometric basis (\ref{TrigB}). 

\begin{theorem}\label{mainCoef}
Let $f\in L^1\cap L^2(SE(2))$ be an admissible function such that $\widetilde{f}\in L^2(\mathbb{Z}^2\backslash SE(2))$ and $\widehat{f}\in\mathcal{H}^1(0,\infty)$.  Suppose $\mathbf{k}:=(k_1,k_2,k_3)\in\mathbb{Z}^3$.  Then
\begin{enumerate}
\item If $\rho(k_1,k_2)\not=0$ then 
\begin{equation}\label{kkk}
\langle\widetilde{f},\psi_\mathbf{k}\rangle=\sqrt{2\pi}e^{-\ii k_3\Phi(k_1,k_2)}\sum_{n\in\mathbb{Z}}e^{-\ii n\Phi(k_1,k_2)}\widehat{f}(2\pi\rho(k_1,k_2))_{n,-k_3},
\end{equation}
where the series converges absolutely.
\item If $\rho(k_1,k_2)=0$ then  
\begin{equation}\label{00k}
\langle\widetilde{f},\psi_\mathbf{k}\rangle=\sqrt{2\pi}\widehat{f}[k_3].
\end{equation}
\end{enumerate}
\end{theorem}
\begin{proof}
(1) Using Theorem \ref{mainEXP}, we have 
\begin{align*}
\langle\widetilde{f},\psi_\mathbf{k}\rangle&=\sqrt{2\pi}Y_f^{k_3}(k_1,k_2)=\sqrt{2\pi}e^{-\ii k_3\Phi(k_1,k_2)}\sum_{n\in\mathbb{Z}}e^{-\ii n\Phi(k_1,k_2)}\widehat{f}(2\pi\rho(k_1,k_2))_{n,-k_3}.
\end{align*}
(2) Let $\mathbf{k}:=(0,0,k_3)\in\mathbb{Z}^3$.  We then obtain  
\[
\langle\widetilde{f},\psi_{\mathbf{k}}\rangle=\sqrt{2\pi}Y_f^{k_3}(\mathbf{0})=\sqrt{2\pi}\widehat{f}[k_3].
\]
\end{proof}

\section{\bf{Non-Abelian Fourier Series of Convolutions on $\mathbb{Z}^2\backslash SE(2)$}}\label{sec:6}

Throughout this section we still assume that $\mu$ is the finite $SE(2)$-invariant measure on the right coset space $\mathbb{Z}^2\backslash SE(2)$ which is normalized with respect to Weil's formula.  We then discuss analytical aspects of non-Abelian Fourier series of convolution of functions on $SE(2)$ by functions on $\mathbb{Z}^2\backslash SE(2)$.

Let $f\in L^1(SE(2))$ and $\psi\in L^1(\mathbb{Z}^2\backslash SE(2),\mu)$.  The convolution of $f$ with $\psi$ is defined as the function $\psi\oslash f:\mathbb{Z}^2\backslash SE(2)\to\mathbb{C}$ given by  
\begin{equation}\label{oslash}
(\psi\oslash f)(\mathbb{Z}^2 g):=\int_{SE(2)} \psi(\mathbb{Z}^2 h)f(h^{-1}\circ g)\dd h,
\end{equation}
for $g\in SE(2)$.
Since $SE(2)$ is a unimodular group, we get 
\[
(\psi\oslash f)(\mathbb{Z}^2 g)=\int_{SE(2)} \psi(\mathbb{Z}^2 g\circ h)f(h^{-1})\dd h,
\]
for $f\in L^1(SE(2))$,  $\psi\in L^1(\mathbb{Z}^2\backslash SE(2),\mu)$,  and $g\in SE(2)$.
It is shown that if $f_j\in L^1(SE(2))$ with $j\in\{1,2\}$ then  
$\widetilde{(f_1\star f_2)}(\mathbb{Z}^2 g)=(\widetilde{f_1}\oslash f_2)(\mathbb{Z}^2 g)$,  for $g\in SE(2)$, see Theorem 4.2 of \cite{AGHF.GSC.PAMQ}.

\begin{lemma}\label{conv12}
Suppose $f_j\in L^1(SE(2))$ with $j\in\{1,2\}$.  If $f_2\in L^2(SE(2))$ then $f_1\star f_2\in L^1\cap L^2(SE(2))$.  
\end{lemma}
\begin{proof}
Since $f_j\in L^1(SE(2))$ with $j\in\{1,2\}$, we get $f_1\star f_2\in L^1(SE(2))$.  If $f_2\in L^2(SE(2))$ then $f_1\star f_2\in L^2(SE(2))$ as well.  Therefore, we conclude that  $f_1\star f_2\in L^1\cap L^2(SE(2))$.  
\end{proof}

\begin{proposition}
{\it Let $f\in L^1(SE(2))$ and $\psi\in L^2(\mathbb{Z}^2\backslash SE(2))$.  Then  $\psi\oslash f\in L^2(\mathbb{Z}^2\backslash SE(2))$ with 
\[
\|\psi\oslash f\|_{L^2(\mathbb{Z}^2\backslash SE(2),\mu)}\le \|f\|_{L^1(SE(2))}\|\psi\|_{L^2(\mathbb{Z}^2\backslash SE(2),\mu)}.
\]
}\end{proposition}
\begin{proof}
Let $f\in L^1(SE(2))$ and $\psi\in L^2(\mathbb{Z}^2\backslash SE(2),\mu)$. We then have
\begin{align*}
\|\psi\oslash f\|_{L^2(\mathbb{Z}^2\backslash SE(2),\mu)}
&=\left(\int_{\mathbb{Z}^2\backslash SE(2)}\left|\int_{SE(2)} \psi(\mathbb{Z}^2 g\circ h)f(h^{-1})\dd h\right|^2\dd\mu(\mathbb{Z}^2 g)\right)^{1/2}
\\&\le\int_{SE(2)}\left(\int_{\mathbb{Z}^2\backslash SE(2)}|\psi(\mathbb{Z}^2 g\circ h)|^2|f(h^{-1})|^2\dd\mu(\mathbb{Z}^2 g)\right)^{1/2}\dd h
\\&=\int_{SE(2)}|f(h^{-1})|\left(\int_{\mathbb{Z}^2\backslash SE(2)}|\psi(\mathbb{Z}^2 g\circ h)|^2\dd\mu(\mathbb{Z}^2 g)\right)^{1/2}\dd h
\\&=\int_{SE(2)}|f(h^{-1})|\left(\int_{\mathbb{Z}^2\backslash SE(2)}|\psi(\mathbb{Z}^2 g)|^2\dd\mu(\mathbb{Z}^2 g)\right)^{1/2}\dd h
\\&=\|\psi\|_{L^2(\mathbb{Z}^2\backslash SE(2),\mu)}\int_{SE(2)}|f(h^{-1})|\dd h
=\|f\|_{1}\|\psi\|_{2}.
\end{align*}
\end{proof}

\begin{corollary}\label{conv2}
{\it Suppose $f_j\in L^1\cap L^2(SE(2))$ with $j\in\{1,2\}$ such that $\widetilde{f_1}\in L^2(\mathbb{Z}^2\backslash SE(2))$. Then  $\widetilde{f_1}\oslash f_2\in L^2(\mathbb{Z}^2\backslash SE(2))$.
}\end{corollary}

The paper is concluded by demonstration of a constructive closed form for formulation of non-Abelian Fourier coefficients of convolution on $\mathbb{Z}^2\backslash SE(2)$ according to the concrete trigonometric basis (\ref{MainBasisFun}). 

\begin{lemma}\label{convMat}
Let $f_j\in L^1(SE(2))$ with $j\in\{1,2\}$. Suppose $p>0$ and $n,k\in\mathbb{Z}$. Then 
\[
\widehat{f_1\star f_2}(p)_{n,k}=\sum_{m\in\mathbb{Z}}\widehat{f_1}(p)_{n,m}\widehat{f_2}(p)_{m,k}.
\]
\end{lemma}
\begin{proof}
Using formula (\ref{star.p}), we get   
\begin{align*}
\widehat{f_1\star f_2}(p)_{n,k}
&=\left(\widehat{f_2}(p)\widehat{f_1}(p)\right)_{n,k}
=\langle\widehat{f_2}(p)\widehat{f_1}(p)\mathbf{e}_n,\mathbf{e}_k\rangle
=\langle\widehat{f_1}(p)\mathbf{e}_n,\widehat{f_2}(p)^*\mathbf{e}_k\rangle
\\&=\sum_{m\in\mathbb{Z}}\langle\widehat{f_1}(p)\mathbf{e}_n,\mathbf{e}_m\rangle\langle\mathbf{e}_m,\widehat{f_2}(p)^*\mathbf{e}_k\rangle
=\sum_{m\in\mathbb{Z}}\langle\widehat{f_1}(p)\mathbf{e}_n,\mathbf{e}_m\rangle\langle\widehat{f_2}(p)\mathbf{e}_m,\mathbf{e}_k\rangle
=\sum_{m\in\mathbb{Z}}\widehat{f_1}(p)_{n,m}\widehat{f_2}(p)_{m,k}.
\end{align*}
\end{proof}

\begin{proposition}
{\it Let $f_j\in L^1(SE(2))$ with $j\in\{1,2\}$ be admissible functions. Then $f_1\star f_2$ is admissible and 
\[
\beta_\infty(f_1\star f_2)\le\beta_\infty(f_1)\beta_\infty(f_2).
\]
}\end{proposition}
\begin{proof}
Let $f:=f_1\star f_2$,  $p>0$ and $n,k\in\mathbb{Z}$. Then, using Lemma \ref{convMat}, we have  
\[
|\widehat{f}(p)_{n,k}|\le\sum_{m\in\mathbb{Z}}|\widehat{f_1}(p)_{n,m}||\widehat{f_2}(p)_{m,k}|\le\sum_{m\in\mathbb{Z}}\|\widehat{f_1}_{n,m}\|_{\infty}\|\widehat{f_2}_{m,k}\|_\infty,
\]
implying that 
\[
\|\widehat{f}_{n,k}\|_\infty\le \sum_{m\in\mathbb{Z}}\|\widehat{f_1}_{n,m}\|_{\infty}\|\widehat{f_2}_{m,k}\|_\infty.
\]
Therefore,  we get 
\begin{align*}
\beta_k(f)&=\sum_{n\in\mathbb{Z}}\|\widehat{f}_{n,k}\|_\infty
\le\sum_{n\in\mathbb{Z}}\left(\sum_{m\in\mathbb{Z}}\|\widehat{f_1}_{n,m}\|_{\infty}\|\widehat{f_2}_{m,k}\|_\infty\right)
\\&=\sum_{m\in\mathbb{Z}}\beta_m(f_1)\|\widehat{f_2}_{m,k}\|_\infty
\le\beta_\infty(f_1)\sum_{m\in\mathbb{Z}}\|\widehat{f_2}_{m,k}\|_\infty
\le\beta_\infty(f_1)\beta_k(f_2)\le\beta_\infty(f_1)\beta_\infty(f_2),
\end{align*}
which guarantees that 
\[
\beta_\infty(f_1\star f_2)\le\beta_\infty(f_1)\beta_\infty(f_2).
\]
\end{proof}
\begin{theorem}
Let $f_j\in L^1\cap L^2(SE(2))$ with $j\in\{1,2\}$ be admissible functions such that $\widetilde{f_1}\in L^2(\mathbb{Z}^2\backslash SE(2))$.  Suppose $\mathbf{k}:=(k_1,k_2,k_3)\in\mathbb{Z}^3$.  
\begin{enumerate}
\item If $\rho(k_1,k_2)\not=0$ then 
\begin{align*}\label{kkk.conv}
\langle\widetilde{f_1}\oslash f_2,\psi_\mathbf{k}\rangle
=\sqrt{2\pi}e^{-\ii k_3\Phi(k_1,k_2)}
\sum_{n\in\mathbb{Z}}\sum_{m\in\mathbb{Z}}e^{-\ii n\Phi(k_1,k_2)}\widehat{f_1}(2\pi\rho(k_1,k_2))_{n,m}\widehat{f_2}(2\pi\rho(k_1,k_2))_{m,-k_3}.
\end{align*}
where the series converges absolutely.
\item If $\rho(k_1,k_2)=0$ then  
$$\langle\widetilde{f_1}\oslash f_2,\psi_\mathbf{k}\rangle=\sqrt{2\pi}\widehat{f_1}[k_3]\widehat{f_2}[k_3].$$
\end{enumerate}
\end{theorem}
\begin{proof}
Let $f:=f_1\star f_2$. Then, using Lemma \ref{conv12}, we get 
$f\in L^1\cap L^2(SE(2))$.
In addition,  applying Corollary \ref{conv2},  we conclude that $\widetilde{f}=\widetilde{f_1}\oslash f_2\in L^2(\mathbb{Z}^2\backslash SE(2))$.

(1) Using the assumption $f_j\in L^1\cap L^2(SE(2))$,  we get that $\widehat{f_j}(p)$ is a Hilbert-Schmidt operator,  for a.e. $p>0$. 
So,  we have  
\[
\|\widehat{f}(p)\|_1=\|\widehat{f_1\star f_2}(p)\|_1=\|\widehat{f_2}(p)\widehat{f_1}(p)\|_1\le\|\widehat{f_2}(p)\|_2\|\widehat{f_1}(p)\|_2<\infty,
\]
implying that for a.e.  $p>0$ the bounded linear operator $\widehat{f}(p)$ is trace-class.  Then 
\begin{align*}
\int_0^\infty\hspace{-0.1cm}\|\widehat{f}(p)\|_1p\dd p&\le \int_0^\infty\hspace{-0.1cm}\|\widehat{f_2}(p)\|_2\|\widehat{f_1}(p)\|_2p\dd p
\\&\le\left(\int_0^\infty\hspace{-0.1cm}\|\widehat{f_2}(p)\|_2^2p\dd p\right)^{1/2}\hspace{-0.1cm}\left(\int_0^\infty\hspace{-0.1cm}\|\widehat{f_1}(p)\|_2^2p\dd p\right)^{1/2}
\hspace{-0.3cm}=\|\widehat{f_2}\|_{\mathcal{H}^2}\|\widehat{f_1}\|_{\mathcal{H}^2},
\end{align*}
guarantees that $\widehat{f}\in\mathcal{H}^1(0,\infty)$.  
Therefore, applying formula (\ref{kkk}) for $f$,  we get  
\begin{align*}
\langle\widetilde{f_1}\oslash f_2,\psi_\mathbf{k}\rangle
&\hspace{-0.1cm}=\hspace{-0.1cm}\langle\widetilde{f_1\star f_2},\psi_\mathbf{k}\rangle
\hspace{-0.1cm}=\hspace{-0.1cm}\sqrt{2\pi}e^{-\ii k_3\Phi(k_1,k_2)}\hspace{-0.1cm}\sum_{n\in\mathbb{Z}}\hspace{-0.1cm}e^{-\ii n\Phi(k_1,k_2)}\widehat{f_1\star f_2}(2\pi\rho(k_1,k_2))_{n,-k_3}.
\end{align*}
By Lemma \ref{convMat},  for every $n\in\mathbb{Z}$, we have   
\[
\widehat{f_1\star f_2}(2\pi\rho(k_1,k_2))_{n,-k_3}=\sum_{m\in\mathbb{Z}}\widehat{f_1}(2\pi\rho(k_1,k_2))_{n,m}\widehat{f_2}(2\pi\rho(k_1,k_2))_{m,-k_3}
\]
So, we obtain 
\begin{align*}
&\langle\widetilde{f_1}\oslash f_2,\psi_\mathbf{k}\rangle
=\sqrt{2\pi}e^{-\ii k_3\Phi(k_1,k_2)}\hspace{-0.1cm}\sum_{n\in\mathbb{Z}}e^{-\ii n\Phi(k_1,k_2)}\hspace{-0.1cm}\left(\widehat{f_2}(2\pi\rho(k_1,k_2))\widehat{f_1}(2\pi\rho(k_1,k_2))\right)_{n,-k_3}
\\&=\sqrt{2\pi}e^{-\ii k_3\Phi(k_1,k_2)}\sum_{n\in\mathbb{Z}}\sum_{m\in\mathbb{Z}}e^{-\ii n\Phi(k_1,k_2)}\widehat{f_1}(2\pi\rho(k_1,k_2))_{n,m}\widehat{f_2}(2\pi\rho(k_1,k_2))_{m,-k_3}.
\end{align*}
(2) Using formulas (\ref{star.n}) and (\ref{00k}) for $f$,  we have 
\[
\langle\widetilde{f_1}\oslash f_2,\psi_\mathbf{k}\rangle=\sqrt{2\pi}\widehat{f}[k_3]=\sqrt{2\pi}\widehat{f_1}[k_3]\widehat{f_2}[k_3].
\]
\end{proof}

\begin{proposition}
{\it Let $f_j:SE(2)\to\mathbb{C}$ with $j\in\{1,2\}$ be rapidly decreasing functions.  Suppose $\mathbf{k}:=(k_1,k_2,k_3)\in\mathbb{Z}^3$.  
\begin{enumerate}
\item If $\rho(k_1,k_2)\not=0$ then 
\begin{align*}\label{kkk.conv.cts}
\langle\widetilde{f_1}\oslash f_2,\psi_\mathbf{k}\rangle
=\sqrt{2\pi}e^{-\ii k_3\Phi(k_1,k_2)}
\sum_{n\in\mathbb{Z}}\sum_{m\in\mathbb{Z}}e^{-\ii n\Phi(k_1,k_2)}\widehat{f_1}(2\pi\rho(k_1,k_2))_{n,m}\widehat{f_2}(2\pi\rho(k_1,k_2))_{m,-k_3}.
\end{align*}
where the series converges absolutely.
\item If $\rho(k_1,k_2)=0$ then  
$\langle\widetilde{f_1}\oslash f_2,\psi_\mathbf{k}\rangle=\widehat{f_1}[k_3]\widehat{f_2}[k_3]$.
\end{enumerate}
}\end{proposition}

\begin{corollary}
{\it Let $f_j:SE(2)\to\mathbb{C}$ with $j\in\{1,2\}$ be compactly supported smooth functions.  Suppose $\mathbf{k}:=(k_1,k_2,k_3)\in\mathbb{Z}^3$.  
\begin{enumerate}
\item If $\rho(k_1,k_2)\not=0$ then 
\begin{align*}
\langle\widetilde{f_1}\oslash f_2,\psi_\mathbf{k}\rangle
=\sqrt{2\pi}e^{-\ii k_3\Phi(k_1,k_2)}
\sum_{n\in\mathbb{Z}}\sum_{m\in\mathbb{Z}}e^{-\ii n\Phi(k_1,k_2)}\widehat{f_1}(2\pi\rho(k_1,k_2))_{n,m}\widehat{f_2}(2\pi\rho(k_1,k_2))_{m,-k_3}.
\end{align*}
where the series converges absolutely.
\item If $\rho(k_1,k_2)=0$ then  
$$\langle\widetilde{f_1}\oslash f_2,\psi_\mathbf{k}\rangle=\widehat{f_1}[k_3]\widehat{f_2}[k_3].$$
\end{enumerate}
}\end{corollary}

{\bf Conclusions.}
This paper has introduced a computational technique to investigate the mathematical theory of non-Abelian Fourier series on the compact right coset space $\mathbb{Z}^2\backslash SE(2)$ according to the concrete orthonormal basis consists of trigonometric polynomials.  As one of the main results,  a constructive closed form for coefficients of square-integrable functions of the form $\widetilde{f}$ on $\mathbb{Z}^2\backslash SE(2)$ presented,  in terms of non-Abelian Fourier matrix elements of $f$ on $SE(2)$.  There are several advantageous for this characterization of the non-Abelian Fourier coefficients which is benefited from the structure of the right group action of $SE(2)$ on the homogeneous space $\mathbb{Z}^2\backslash SE(2)$.  
To begin with,  the introduced closed form involves a unified discrete sampling of non-Abelian Fourier matrix elements from the interval $[0, \infty)$, which guarantees  a discrete spectrum of the computational method.   In addition,  appearance of non-Abelian Fourier matrix elements in the closed form formulation of coefficients implies compatibility of the characterization with the group convolution of $SE(2)$ on $\mathbb{Z}^2\backslash SE(2)$.  

{\bf Acknowledgments.} 
This research is supported by NUS Startup grants A-0009059-02-00 and A-0009059-03-00 and AME Programmatic Fund Project MARIO A-0008449-01-00. The authors gratefully acknowledge the supporting agencies. The findings and opinions expressed here are only those of the authors, and not of the funding agencies.

\bibliographystyle{amsplain}
\bibliography{TrigX}

\end{document}